\definecolor{darkred}{rgb}{1,0,0} 
\definecolor{darkgreen}{rgb}{0,0.8,0}
\definecolor{darkblue}{rgb}{0,0,1}
\def\reflb#1#2{\begingroup
    #2%
    \def\@currentlabel{#2}%
    \phantomsection\label{#1}\endgroup
}
\numberwithin{equation}{section}
\newtheorem {Theorem}{Theorem}
\numberwithin{Theorem}{section}
\newtheorem {Lemma}[Theorem]    {Lemma}
\newtheorem {Proposition}[Theorem]{Proposition}
\newtheorem {Corollary}[Theorem]{Corollary}
\theoremstyle{definition}
\theoremstyle{remark}
\newtheorem{Remark}[Theorem]{Remark}
\newtheorem{Example}[Theorem]{Example}
\newcommand{\CS}{{\mathcal S}}
\newcommand{\LL}{{\mathcal L}}
\newcommand{\id}{{\mathit id}}
\newcommand{\charr}{{\mathit char}}
\newcommand{\tD}{\tilde{D}}
\newcommand{\Aa}{{\mathcal A}}
\newcommand{\CB}{{\mathcal B}}
\newcommand{\PP}{{\mathcal P}}
\newcommand{\Ss}{{\mathcal S}}
\def    \nat    {{\natural}}
\def    \F      {{\mathbb F}}
\def    \C      {{\mathbb C}}
\def    \R      {{\mathbb R}}
\def    \Z      {{\mathbb Z}}
\def    \N      {{\mathbb N}}
\def    \Q      {{\mathbb Q}}
\def    \12    {{\frac{1}{2}}}
\def    \p      {\partial}
\def    \codim  {\operatorname{codim}}
\def    \tr     {\operatorname{tr}}
\def    \GL     {\operatorname{GL}}
\def    \HF     {\operatorname{HF}}
\def    \H     {\operatorname{H}}
\def    \HM     {\operatorname{HM}}
\def    \CF     {\operatorname{CF}}
\def    \Fix     {\operatorname{Fix}}
\def    \bx     {\bar{x}}
\def    \bz     {\bar{z}}
\def    \MUCZ  {\operatorname{\mu_{\scriptscriptstyle{CZ}}}}
\def    \ind  {\operatorname{ind}}
\def \odd   {\scriptscriptstyle{odd}}
\def \even   {\scriptscriptstyle{even}}
\def    \str  {\operatorname{str}}
\def    \gr {\operatorname{gr}}
\def    \tr  {\operatorname{tr}}
\def    \sgn  {\operatorname{sign}}
\def    \coker {\operatorname{coker}}
\begin{document}


\setlength{\smallskipamount}{6pt}
\setlength{\medskipamount}{10pt}
\setlength{\bigskipamount}{16pt}





\title[On the iterated Floer homology]{On the iterated Hamiltonian Floer homology}

\author[Erman \c C\. inel\. i]{Erman \c C\. inel\. i}

\author[Viktor L. Ginzburg]{Viktor L. Ginzburg}

\address{Department of Mathematics, UC Santa Cruz, Santa Cruz, CA
95064, USA} \email{scineli@ucsc.edu}

\address{Department of Mathematics, UC Santa Cruz, Santa Cruz, CA
95064, USA} \email{ginzburg@ucsc.edu}

\subjclass[2010]{53D40, 37J10, 37J45} 

\keywords{Periodic orbits, Hamiltonian diffeomorphisms, Floer
  homology, Smith inequality}

\date{\today} 

\thanks{This work is partially supported by Simons
  Collaboration Grant 581382}


\begin{abstract} 
  The focus of the paper is the behavior under iterations of the
  filtered and local Floer homology of a Hamiltonian on a
  symplectically aspherical manifold. The Floer homology of an
  iterated Hamiltonian comes with a natural cyclic group action. In
  the filtered case, we show that the supertrace of a generator of
  this action is equal to the Euler characteristic of the homology of
  the un-iterated Hamiltonian. For the local homology the supertrace
  is the Lefschetz index of the fixed point. We also prove an analog
  of the classical Smith inequality for the iterated local homology
  and the equivariant versions of these results.
\end{abstract}

\maketitle


\tableofcontents

\section{Introduction}
\label{sec:intro+results}

\subsection{Introduction}
\label{sec:intro}
The main theme of this paper is the behavior of the filtered and local
Hamiltonian Floer homology under iterations. In particular, under
certain conditions we establish lower bounds on the rank of the Floer
homology of the iterated Hamiltonian via the homology of the original
Hamiltonian. The focus here is on the filtered or local rather than
global Floer homology.

Till very recently, virtually nothing was known about how the filtered
Floer homology behaves under iterations of a Hamiltonian $H$. With
local Floer homology the situation has been more encouraging. Namely,
consider an isolated one-periodic orbit $x$ of the Hamiltonian
diffeomorphism $\varphi_H$ generated by a Hamiltonian $H$.  As was
shown in \cite{GG:gap}, the local Floer homology group of the iterated
$k$-periodic orbit $x^k$ is independent of the order of iteration $k$,
up to a shift of degree, as long as $k$ is admissible, i.e., the
algebraic multiplicity of the eigenvalue $1$ is the same for
$D\varphi_H$ and $D\varphi^k_H$. Without the latter assumption, the
question is much more subtle and again essentially nothing was known
about the relation between the local Floer homology groups for $x$ and
$x^k$ prior to now.

The key ingredient in our arguments is the $\Z_k$-action on the
filtered or local Floer homology of $\varphi_H^k$. This action has a
canonical generator $g$ given, roughly speaking, by the
time-shift. The existence of such an action has been known for quite
some time.  For instance, in the framework of persistence modules this
action is used in \cite{PS,PSS,Zh} to study the placement of the
iterated Hamiltonians in the group of all Hamiltonian diffeomorphisms
in terms of the Hofer distance. In \cite{To} a slightly different
version of the action (less suitable for our purposes) is considered
and a supertrace relation, similar to the one proved in this paper, is
established for the Floer homology of commuting symplectomorphisms.

Our first result equates the supertrace of $g$ to the Euler
characteristic of the corresponding Floer homology group for
$\varphi_H$. (In the filtered case the manifold is assumed to be
symplectically aspherical.) For the local homology, this Euler
characteristic is simply the Lefschetz index of the fixed point.

The second result concerns a Floer theoretical version of the Smith
inequality. The classical version of this inequality relates the rank
of the total homology of a compact $\Z_p$-space $X$ with coefficients
in the field $\F=\F_p$ and the rank of the homology of the fixed point
set $X^{\Z_p}$, where $p$ is prime and $\F_p=\Z/p\Z$; see, e.g.,
\cite{Bo,Br} and references therein. Namely,
\begin{equation}
\label{eq:smith-intro}
\dim \H_*(X^{\Z_p};\F)\leq \dim \H_*(X; \F),
\end{equation}
where on the right and left hand sides we have the sum of dimensions
for all degrees.

From various perspectives, it is reasonable to expect a similar
inequality to hold for the filtered Floer homology of a Hamiltonian
diffeomorphism $\varphi_H$ of a symplectically aspherical closed
symplectic manifold when the role of the right hand side is taken by
the filtered Floer homology of $\varphi_H^p$ and the left hand side is
the filtered Floer homology of the original Hamiltonian diffeomorphism
$\varphi_H$. (For instance, $\Z_p$ acts on a basis of the Floer
complex of $\varphi_H^p$ and the elements of this basis, fixed up to a
sign by the action, form a basis for the Floer complex of
$\varphi_H$.)  For $p=2$, such an inequality is proved in \cite{Se} by
introducing and utilizing a $\Z_2$-equivariant version of the
pair-of-pants product.  Analogous results for all primes $p\geq 2$
have been recently obtained in \cite{ShZh}.

Our goal here is much less ambitious: we prove a version of the Smith
inequality for the local Floer homology $\HF_*(x)$ of an isolated
periodic orbit $x$. This, of course, also follows from the main
theorem in \cite{ShZh}, but the proof of the local version is much
simpler. Namely, we identify the local Floer homology with the generating
function homology (see \cite{Vi}) and then use the fact that the
latter is the homology of an actual $\Z_p$-space, where $p$ is the
iteration order. It remains to apply \eqref{eq:smith-intro} to this
space. The $\Z_p$-action on the Floer homology does not explicitly
enter the statement, but it is essential as a motivation and, somewhat
disguised, it also plays a central role in the argument.

The supertrace formula and the Smith inequality for local Floer
homology, although quite simple, have applications in dynamics of
degenerate Hamiltonian diffeomorphisms: these results, just as the
persistence theorem from \cite{GG:gap}, are used to guarantee that
$\HF_*(x^k)\neq 0$ under favorable conditions on $x$. We refer the
reader to \cite{GG:gap,Sh:HZ,Sh} for examples of such applications.

We also prove equivariant versions of these results pertaining to the
setting where the manifold is equipped with a symplectic action of a
finite group $G$ and $\varphi$ is $G$-equivariant. The two settings,
equivariant and iterated, are related via the so-called Dold's trick
discussed in Section \ref{sec:eq-str}. Most of the results of this
paper can be further generalized, at least partially, sometimes within
the same class of methods as used here. Some of these generalizations
are discussed in the introduction, but in most cases we have opted to
keep the framework, conditions and proofs simple and free of technical
complications. For instance, we left out the generalization of the
equivariant filtered supertrace formula to symplectomorphisms, which
requires additional requirements to have the action filtration
well-defined.

\subsection{Main results}
\label{sec:results} 

\subsubsection{Filtered supertrace formula}
Let $H \colon S^1 \times M \rightarrow \R$ be a one-periodic in time
Hamiltonian on a closed symplectically aspherical manifold
$(M, \omega)$. Here we identify $S^1$ with $\R/ \Z$. 
The filtered Floer homology of the iterated Hamiltonian
$H^{\nat k} \colon \R/k\Z \times M \rightarrow \R$,
  $H^{\nat k} (t, \cdot) := H(t, \cdot)$ (or the local Floer homology
of an iterated orbit $x^k$) carries a $\Z_k$-action, which comes with
a preferred generator
\[
  g_* \colon \HF_*^{(ka,\,kb)} (H^{\nat k}; \F) \rightarrow
  \HF_*^{(ka,\,kb)} (H^{\nat k}; \F).
\]
Here the coefficient ring $\F$ is a field and the endpoints $ka$ and
$kb$ are not in the action spectrum $\CS(H^{\nat k})$ of $H^{\nat
  k}$. Roughly speaking, $g$ arises from the time-shift map
$t \mapsto t+1$ applied to $k$-periodic orbits of $H^{\nat k}$. In
Section \ref{sec:def} we recall in detail the definition of the
action, paying particular attention to the role of orientations and
sign changes which is central to our results. The discussion of the
action in that section is essentially self-contained albeit brief.

Then, in Section \ref{sec:super}, we show that the supertrace
\begin{equation}
\label{eq:str}
  \str(g) :=
  \sum (-1)^i\tr
  \big( g_i \colon\HF_i^{(ka,\,kb)} (H^{\nat k}; \F)
  \rightarrow \HF_i^{(ka,\,kb)} (H^{\nat k}; \F)\big)
\end{equation}
of $g$ is equal to the Euler characteristic
$$
\chi\big(\HF_*^{(a,\,b)}(H; \F)\big)=\sum (-1)^i \dim \HF_i^{(a,\,b)} (H; \F)
$$
of $\HF_*^{(a,\,b)} (H; \F)$, viewed as an element of $\F$. In other
words, $\chi\big(\HF_*^{(a,\,b)}(H; \F)\big)$ is the image in $\F$ of
the Euler characteristic under the natural map $\Z\to\F$ or,
equivalently, the supertrace of the identity map on the homology of
$H$. For instance, $\chi\big(\HF_*^{(a,\,b)}(H; \F)\big)$ is precisely
equal to the Euler characteristic when $\F=\Q$ and is the Euler
characteristic modulo $p$ when $\F=\F_p$ and $p$ is prime.

The equality of the supertrace and the Euler characteristic is the
first result of this paper:

\begin{Theorem}[Supertrace Formula, I]
  \label{thm:1}
  Let $H$ be a one-periodic Hamiltonian on a closed symplectically
  aspherical manifold $(M, \omega)$.  Assume that $\F$ is a field.
  Then the supertrace of $g$ is equal to
  $\chi\big(\HF_*^{(a,\,b)}(H; \F)\big)$.
\end{Theorem}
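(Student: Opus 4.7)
The plan is to reduce the statement to a chain-level calculation and then analyze the time-shift action on the preferred generators of the Floer complex. Since $g$ is a chain automorphism of the bounded complex of finite-dimensional $\F$-vector spaces $\CF_*^{(ka,\,kb)}(H^{\nat k};\F)$ and it induces $g_*$ on homology, the Hopf trace formula gives
\[
  \str(g_*) \;=\; \str\!\Big(g\colon \CF_*^{(ka,\,kb)}(H^{\nat k};\F)\to \CF_*^{(ka,\,kb)}(H^{\nat k};\F)\Big),
\]
so the problem reduces to computing the supertrace at the chain level.

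The Floer complex $\CF_*^{(ka,\,kb)}(H^{\nat k};\F)$ has a preferred basis indexed (up to sign) by the $k$-periodic orbits of $H^{\nat k}$ with action in $(ka,\,kb)$, and the time-shift $y(t)\mapsto y(t+1)$ permutes these orbits. The generator $g$ acts by this permutation combined with an orientation sign. Partition the basis into $g$-orbits. Any $g$-orbit of length $\ell>1$ consists of $\ell$ equidegree basis vectors (all iterates of a given non-stationary orbit share the same Conley--Zehnder index) and is acted on by a signed cyclic permutation, whose trace on each graded piece is zero. Such orbits contribute nothing to $\str(g)$. The only fixed orbits are the $k$-th iterates $x^k$ of the 1-periodic orbits $x$ of $H$ with action in $(a,\,b)$, and writing $\sigma(x)\in\{\pm 1\}$ for the sign with which $g$ acts on the generator $x^k$, we obtain
\[
  \str(g) \;=\; \sum_{x}\,(-1)^{|x^k|}\,\sigma(x).
\]

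The heart of the proof is the sign identity $\sigma(x)=(-1)^{|x^k|-|x|}$, where $|\cdot|$ denotes the Conley--Zehnder degree of the generator. This is a local orientation comparison: one expresses the coherent orientation of the determinant line attached to $x^k$ in terms of the one attached to $x$ and then reads off the effect of the time-shift using the explicit description of $g$ recalled in Section \ref{sec:def}. Granting this identity, $(-1)^{|x^k|}\sigma(x) = (-1)^{|x|}$ for every fixed generator, and the sum above collapses to
\[
  \str(g) \;=\; \sum_{x}(-1)^{|x|} \;=\; \chi\!\big(\CF_*^{(a,\,b)}(H;\F)\big) \;=\; \chi\!\big(\HF_*^{(a,\,b)}(H;\F)\big),
\]
the last equality being Hopf's formula for the identity chain map on $\CF_*^{(a,\,b)}(H;\F)$. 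The principal obstacle is the sign calculation: the combinatorial cancellation is elementary, but pinning down $\sigma(x)$ requires a careful traversal of the coherent orientation conventions together with an honest accounting of how the time-shift interacts with the iteration, which is precisely what the self-contained orientation discussion in Section \ref{sec:def} is designed to support.
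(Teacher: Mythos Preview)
Your strategy coincides with the paper's: pass to the chain level via the Hopf trace formula, observe that only the $k$-iterated orbits $x^k$ contribute to the diagonal, and identify the sign $\sigma(x)$ with $(-1)^{\MUCZ(x^k)-\MUCZ(x)}$ (Proposition~\ref{prop:it-sign} in the paper). The sign identity is indeed the crux, and your description of it is accurate.

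There is, however, one point you gloss over. By the construction in Section~\ref{sec:def}, the generator is $g=CR$, where $R$ is the time-shift and $C$ is a continuation map (constant in $H$) from $(\tilde H^{\nat k}_{t+1},J_{t+1})$ back to $(\tilde H^{\nat k}_t,J_t)$. Thus $g$ is \emph{not} literally a signed permutation of the preferred basis; your sentence ``the generator $g$ acts by this permutation combined with an orientation sign'' is only true after an additional reduction. The paper handles this by (i) using additivity of both the supertrace and the Euler characteristic with respect to the action filtration to reduce to the case where $(ka,kb)$ contains a single action value, and (ii) choosing the continuation homotopy constant in $J$ near the orbits, so that $C=\id$ on the (now zero-differential) complex and hence $g=R$. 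Without this, you would still need to argue that the diagonal entries of $CR$ agree with those of $R$: for $y\neq R(y)$ the entry vanishes by an energy argument (since $C$ is constant in $H$ and $R$ preserves action), but for $y=x^k$ you must check that $\langle C(x^k),x^k\rangle=+1$, which is exactly where the one-periodicity of $J$ near the orbits enters. Once this is addressed, your argument and the paper's are the same.
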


Since both the supertrace and the Euler characteristic are additive with
respect to the action filtration, the proof of Theorem \ref{thm:1} reduces
to checking the orientation change at non-degenerate iterated orbits $x^k$
under the time-shift map $t \mapsto t+1$.

\begin{Remark}
  There seems to be no reason to believe that Theorem \ref{thm:1}
  would extend as stated to manifolds with $\omega|_{\pi_2(M)}\neq
  0$. The obstacle is that then there can be an orbit $x$ such that
  for no choice of capping its action is in $(a,\,b)$ but, for a suitable
  capping, $x^k$ has action in $(ka,\,kb)$.
\end{Remark}

For the local Floer homology $\HF_*(x^k)$ of an iterated orbit $x^k$,
Theorem \ref{thm:1} takes the following form.

\begin{Corollary}
\label{cor}
Let $M^{2n}$ be a symplectic manifold and $x \colon S^1 \rightarrow M$
be an isolated one-periodic orbit of a Hamiltonian $H$ on $M$.  Assume
that $\F$ is a field. For isolated iterates $x^k$ of $x$, the
supertrace of the generator is, up to the sign $(-1)^n$, equal to the
image in $\F$ of the Lefschetz index $\chi(x)$ of the fixed point $x(0)$.
\end{Corollary}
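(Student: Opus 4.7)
My plan is to deduce the corollary from Theorem \ref{thm:1} by localization, and then to identify $\chi(\HF_*(x;\F))$ with $(-1)^n\chi(x)$ by a classical Morse/generating function computation.

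First I would localize. Since $\HF_*(x^k;\F)$ depends only on a neighborhood of $x(S^1)$, I would pick a Darboux tubular neighborhood of $x(S^1)$, embed it symplectically into some auxiliary closed symplectically aspherical manifold $(M',\omega')$, and extend $H$ to a Hamiltonian $H'$ on $M'$ supported in this chart. A $C^2$-small non-degenerate perturbation $\tilde H'$ of $H'$, still supported near $x(S^1)$, can be chosen so that every $k$-periodic orbit of $(\tilde H')^{\nat k}$ sufficiently close to $x^k$ is non-degenerate. Setting $a := \CA_H(x)$, for $\epsilon>0$ small enough the only $k$-periodic orbits of $(\tilde H')^{\nat k}$ with action in $(ka-\epsilon,\,ka+\epsilon)$ are those clustering near $x^k$, so
\[
\HF_*(x^k;\F)\;\cong\;\HF_*^{(ka-\epsilon,\,ka+\epsilon)}\big((\tilde H')^{\nat k};\F\big).
\]
Because the perturbation is performed on $H$ and then iterated, the preferred $\Z_k$-generator $g$ on $\HF_*(x^k;\F)$ is identified under this isomorphism with the generator of Section \ref{sec:def}.

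Applying Theorem \ref{thm:1} to $\tilde H'$ with the window $(a-\epsilon,\,a+\epsilon)$ then gives
\[
\str(g)\;=\;\chi\big(\HF_*^{(a-\epsilon,\,a+\epsilon)}(\tilde H';\F)\big)\;=\;\chi\big(\HF_*(x;\F)\big),
\]
so the remaining task is to check that $\chi(\HF_*(x;\F))$ equals the image in $\F$ of $(-1)^n\chi(x)$. For this I would use the Viterbo generating function identification alluded to in the introduction (the same tool invoked later for the local Smith inequality) to match the local Floer complex at $x$ with the local Morse complex at the associated critical point of a generating function $S$, under the standard Conley--Zehnder-to-Morse shift $\CZ = \MUM - n$. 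The Euler characteristic of the local Morse complex at the origin is the Poincar\'e--Hopf/Lefschetz index $\chi(x)$ by classical index theory, and the degree shift by $n$ accounts for the sign $(-1)^n$.

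The main obstacle, though mild, is the compatibility of the localization with the cyclic action: the perturbation must be carried out on $H$ and then iterated, rather than directly on $H^{\nat k}$, so that the time-shift on $k$-periodic orbits of $(\tilde H')^{\nat k}$ agrees with the generator of the $\Z_k$-action constructed in Section \ref{sec:def}. Once this is set up, both the reduction to Theorem \ref{thm:1} and the Morse-theoretic computation of the Euler characteristic are routine, and the factor $(-1)^n$ traces back solely to the Conley--Zehnder versus Morse index normalization.
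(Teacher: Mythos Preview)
Your proposal is correct, but it takes a somewhat more elaborate route than the paper on both steps.

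For the reduction $\str(g)=\chi(\HF_*(x;\F))$, you formally localize by embedding a neighborhood of $x$ into an auxiliary closed aspherical manifold and then quote Theorem~\ref{thm:1} for a narrow action window. The paper instead simply observes that the \emph{proof} of Theorem~\ref{thm:1} --- namely the chain-level supertrace computation and Proposition~\ref{prop:it-sign} --- goes through verbatim for the local Floer complex, with no embedding needed. Your route works (after possibly invoking Remark~\ref{rmk:non-contractible} to reduce to a constant orbit in $\R^{2n}$ so that the embedding is straightforward), but it adds a layer of packaging that the direct argument avoids.

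For the identification $\chi(\HF_*(x;\F))=(-1)^n\chi(x)$, you go through Viterbo's generating function isomorphism and the Conley--Zehnder/Morse index shift. The paper does this in one line from the determinant identity~\eqref{eq:cz-det}: for a non-degenerate perturbation one has $(-1)^{\MUCZ(y)}=(-1)^n\sgn\det(D\varphi(y)-I)$, and summing over the perturbed orbits near $x$ gives $(-1)^n\chi(x)$ directly. Your generating-function argument is valid and is the same machinery used later for the Smith inequality, but here it is heavier than necessary; identity~\eqref{eq:cz-det} already encodes exactly the sign you need.
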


The corollary readily follows from Theorem \ref{thm:1} and its
proof. For the source of the sign $(-1)^n$ in
  Corollary \ref{cor} see the determinant identity \eqref{eq:cz-det}
  that relates the Conley-Zehnder and Lefschetz indices.  Recall also that when
  an isolated fixed point $x \in \Fix( \varphi)$ is degenerate, one can compute
  $\chi(x)$ as
 \[
 \chi(x)= \sum_{y \in \Fix(\tilde{\varphi}) \cap U}  
  \sgn \big( \det (D_y \tilde{\varphi} -I)\big),
 \]
 where $U$ is an isolating neighborhood of $x$ and $\tilde{\varphi}$
 is a non-degenerate perturbation of $\varphi$ supported in $U$. 

\begin{Remark}
  \label{rmk:top}
  One important consequence of Corollary \ref{cor} is that
  $\HF_*(x^k)\neq 0$ for all $k$ whenever $\chi(x) \neq 0$. Note in
  this connection that there exists a diffeomorphism $\varphi$ of
  $\R^n$, $n\geq 3$, with an isolated, for all iterations, fixed point
  $x$ such that $\chi(x)\neq 0$, but $\chi(x^k)=0$ for some $k$. (The
  construction of $\varphi$, described to us by Hern\'andez-Corbato,
  is non-obvious.) We do not know if this can also happen in the
  Hamiltonian setting, but Corollary~\ref{cor} does not rule out the
  existence of such Hamiltonian diffeomorphisms. Overall, the question
  if the local Floer homology of a homologically non-trivial orbit can
  vanish for some (isolated) iterates is still open; cf.\
  \cite{GG:gap}.
\end{Remark}

\begin{Remark}
  \label{rmk:non-contractible} Note that in Theorem \ref{thm:1}, the
  Floer homology groups of $\varphi_H$ and $\varphi_H^k$ ``count''
  only contractible periodic orbits and thus the theorem gives
  information only about such orbits. On the other hand, in Corollary
  \ref{cor} we do not need to require the orbit $x$ to be
  contractible. Indeed, recall that the composition of the flow of $H$
  near $x$ with a local loop of Hamiltonian diffeomorphisms does not
  change the local Floer homology, up to an even shift of grading; see
  \cite{Gi:CC,GG:gap}. (Here one should view the flow and the loop as
  defined on a neighborhood of the image of $x$ in the extended
  phase-space $M\times S^1$.) Then, composing the flow with a suitably
  chosen loop, we reduce the general case to the case where $x$ is a
  constant orbit.
\end{Remark}

\subsubsection{Equivariant supertrace formula}
\label{sec:eq-str}
Theorem \ref{thm:1} has an equivariant counterpart. To set the stage
for it, let us recall that the $k$-periodic orbits of $\varphi$ are in
one-to-one correspondence with the fixed points of the
symplectomorphism
$$
\varphi_k\colon (x_1,\ldots,x_k)\mapsto
\big(\varphi(x_2),\ldots,\varphi(x_k),\varphi(x_1)\big)
$$
of the product $M^k=M\times \ldots \times M$ ($k$-times). This
symplectomorphism commutes with the symplectic $G=\Z_k$-action
generated by the cyclic permutation
$$
g\colon (x_1,\ldots,x_k)\mapsto (x_2,\ldots,x_k,x_1).
$$
This observation from \cite{Do} allows one to translate problems about
periodic orbits to the equivariant context and is sometimes referred
to as Dold's trick; in the symplectic context it is utilized in
\cite{He}.  The fixed point set $(M^k)^G$ is the multi-diagonal and
the restriction $\varphi_k^G=\varphi_k\mid_{(M^k)^G} $ is just
$\varphi$. Under suitable additional conditions on $M$, the global
Floer homology $\HF_*(\varphi_k)$ is naturally isomorphic
(up to a shift of grading) to the Floer homology
$\HF_*(\varphi^k)$; \cite{LL}. (However, incorporating the action
filtration into this isomorphism requires some care; cf.\ Remark
\ref{rmk:thm:1e}.)

Using this construction as motivation, consider a symplectic action of
$G=\Z_k$ on a closed symplectically aspherical manifold $M$ and a
Hamiltonian diffeomorphism $\varphi\colon M\to M$ generated by a
$G$-invariant Hamiltonian $H$. (Thus all maps in the Hamiltonian
isotopy $\varphi_H^t$ are $G$-equivariant.) The fixed point set $M^G$
is a symplectic submanifold of $M$. Denote its connected components by
$C_j$, and by $H^G$ and, respectively, $H^G_j$ the restrictions of $H$
to $M^G$ and $C_j$, generating $\varphi^G:=\varphi\mid_{M^G}$ and
$\varphi^G_j:=\varphi\mid_{C_j}$. Then, similarly to the iterated
case, the action of $G$ on the filtered Floer homology
$\HF_*^{(a,\,b)} (H; \F)$ is defined. In particular, for a generator $g$
of $G$, we have the supertrace
$$
\str(g) :=
  \sum (-1)^i\tr
  \big( g_i \colon\HF_i^{(a,\,b)} (H; \F)
  \rightarrow \HF_i^{(a,\,b)} (H; \F)\big),
$$
where, as above, $\F$ is a field.

\begin{Theorem}[Supertrace Formula, II]
  \label{thm:1e}
  Under the above conditions, assume furthermore that every
  one-periodic orbit of $H^G$ which is contractible in $M$ is
  also contractible in $M^G$. Then
  \begin{equation}
    \label{eq:str2}
  \str(g)=\sum_j (-1)^{\codim C_j/2}\chi\big(\HF_*^{(a,\,b)} (H_j^G; \F)\big).
\end{equation}
\end{Theorem}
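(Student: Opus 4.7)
The plan is to parallel the proof of Theorem~\ref{thm:1}, reducing the statement to a local sign computation at each $G$-fixed non-degenerate one-periodic orbit. First I would choose a $G$-invariant perturbation $\tilde{H}$ of $H$, keeping the endpoints $a,b$ out of the spectrum, whose one-periodic orbits in $M^G$ are non-degenerate both in $M$ and in each component $C_j$, and whose one-periodic orbits outside $M^G$ are non-degenerate in $M$. Such a perturbation exists: one first perturbs $H$ on the symplectic submanifold $M^G$ to make $\tilde H^G$ non-degenerate there, extends $G$-equivariantly to a neighborhood of $M^G$, and then adds a $G$-invariant bump supported away from $M^G$ to achieve global non-degeneracy. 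Orbits lying outside $M^G$ come in free $G$-orbits and contribute zero to $\str(g)$; only contractible orbits $x\subset M^G$ enter the trace as $g$-fixed generators.

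Next I would identify the $g$-fixed generators with those of the Floer complexes $\CF^{(a,b)}_*(\tilde H^G_j;\F)$. By symplectic asphericity of $M$, the complex $\CF^{(a,b)}_*(\tilde H;\F)$ is generated without cappings by contractible one-periodic orbits, so a generator is $g$-fixed precisely when the loop $x$ is pointwise $G$-fixed, i.e., $x\subset C_j$ for some $j$. The hypothesis on contractibility ensures that every such $x$ is in fact contractible in $C_j$, hence is a genuine generator of $\CF^{(a,b)}_*(\tilde H^G_j;\F)$. Using additivity of both sides of \eqref{eq:str2} with respect to action-window subdivision, the theorem reduces to the following local identity, to be verified at each non-degenerate orbit $x\subset C_j$:
\[
\eps(g,x)\cdot(-1)^{\CZ^{(M)}(x)}\;=\;(-1)^{\codim C_j/2}\cdot(-1)^{\CZ^{(C_j)}(x)},
\]
where $\eps(g,x)\in\{\pm 1\}$ is the sign with which $g$ acts on the coherent Floer orientation line at $x$.

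To verify this identity I would split the linearized flow $D\varphi_{\tilde H}^{\,t}$ along $x$ into its tangential and normal pieces with respect to $C_j$. The tangential part is a symplectic loop on $TC_j|_x$ of Conley-Zehnder index $\CZ^{(C_j)}(x)$, on whose orientation line $g$ acts trivially. The normal part $B(t)$ acts on the symplectic normal bundle $N_x$ to $C_j$ along $x$, commutes with the symplectic $\Z_k$-action on $N_x$, and satisfies $\det(B(1)-I)\neq 0$ by non-degeneracy of $x$. Decomposing $N_x$ into complex $g$-eigenplanes and comparing, block by block, the action of $g$ on the determinant line of the Cauchy-Riemann operator asymptotic to $B$ with the sign $(-1)^{\CZ^{\perp}(x)}$ yields the factor $(-1)^{\dim_\R N_x/2}=(-1)^{\codim C_j/2}$, independently of the individual rotation angles of $g$. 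Summing over $x\subset C_j$ produces $(-1)^{\codim C_j/2}\chi\big(\HF^{(a,b)}_*(H^G_j;\F)\big)$, and summing over $j$ yields \eqref{eq:str2}.

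The main obstacle is the normal-bundle sign computation: one must follow the $g$-action through the definition of coherent orientations --- determinant lines of Cauchy-Riemann operators with asymptotics fixed by the linearized flow and auxiliary trivializations --- and verify that the contributions from complex-conjugate eigenplanes of $g$ on $N_x$ conspire to give the uniform sign $(-1)^{\codim C_j/2}$ rather than something angle-dependent. This is the equivariant analogue of the time-shift orientation calculation underpinning Theorem~\ref{thm:1}.
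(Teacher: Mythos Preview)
Your proposal is correct and follows essentially the same route as the paper: reduce to a chain-level computation after an equivariant non-degenerate perturbation, observe that only $G$-fixed orbits contribute, and verify for each such orbit $x\subset C_j$ the local identity $\sgn(g)(-1)^{\MUCZ(x)}=(-1)^{m}(-1)^{\MUCZ(x^G)}$ with $2m=\codim C_j$, by splitting the linearized flow into its tangential and normal parts along $M^G$.

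The only noteworthy difference is in how the normal sign is computed. You propose to decompose the normal bundle into $g$-eigenplanes and argue block by block. The paper instead packages this step into Lemmas~\ref{lemma:index} and~\ref{lemma:index2}: since $(N_x)^G=0$ and the extended operator $\tD$ is $G$-equivariant, one has $(\ker\tD)^G=(\coker\tD)^G=0$, and then the elementary fact that a generator of a cyclic group acting on a real vector space $V$ with $V^G=0$ acts on $\det V$ by $(-1)^{\dim V}$ gives $\sgn(g)=(-1)^{\ind\tD}=(-1)^{m-\MUCZ(\Phi)}$ in one stroke. This avoids the case analysis your block-by-block approach would require (in particular, for higher-dimensional isotypic components the $S^1$-extension argument needs care, since the centralizer of $g$ in $\Sp$ can be strictly larger than that of the full circle). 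Your approach would work, but the paper's formulation is cleaner and bypasses the eigenplane decomposition entirely.
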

Here, again, the right hand side is interpreted as an element of
$\F$. The proof of this theorem is conceptually quite similar to the
proof of Theorem \ref{thm:1} and we only briefly outline it in Section
\ref{sec:pf-thm:1e}. As in the case of iterated maps, Theorem
\ref{thm:1e} yields a local result which we omit for the sake of
brevity.

\begin{Remark}
  \label{rmk:thm:1e}
  Some of the conditions of Theorem \ref{thm:1e} are imposed only for
  the sake of simplicity and most likely can be significantly relaxed,
  although at the expense of imposing more cumbersome
  requirements. For instance, it should be sufficient to assume that
  only $\varphi$ (rather than the entire flow $\varphi^t_H$) is
  $G$-equivariant.  However, then one would have to require the maps
  $\varphi^{G'}\colon M^{G'}\to M^{G'}$ to be Hamiltonian for all
  subgroups $G'$ of $G$, and also ensure that the actions for
  $\varphi$ and $\varphi^G_j$ are calculated in a consistent way, and
  a proof would most certainly call for a change of perspective.

Theorem \ref{thm:1e}, as stated, does not imply
    Theorem \ref{thm:1} via Dold's trick; for the map $\varphi_k$ is a
    symplectomorphism, but not a Hamiltonian diffeomorphism. There is
    global (i.e., unfiltered) version of Theorem \ref{thm:1e} for
    symplectomorphisms $\varphi$ commuting with $g$, equating (with
    our sign conventions) $(-1)^n\str(g)$ and the Lefschetz number
    $\chi\big(\varphi^G\big)$ of $\varphi^G$. (When $(a,\,b)=\R$ and
    $\varphi$ is Hamiltonian, $(-1)^n\chi\big(\varphi^G\big)$ is equal
    to the right hand side of \eqref{eq:str2}.)  This version is
    proved in \cite{To}, albeit with a somewhat
    different construction of the $G$-action on the Floer cohomology.
  This construction does not readily extend to the filtered
  setting. In fact, to incorporate the action filtration, one would
  certainly need to impose additional requirements on the
  symplectomorphism $\varphi$. While it is not clear what the optimal
  conditions are, it should be sufficient, for
    instance, to assume that $\varphi$ is isotopic, via an
  equivariant Hamiltonian isotopy, to a symplectomorphism with
  connected, none-empty fixed point set. This version of the theorem
  would imply Theorem \ref{thm:1} and require only minimal
  modifications to the proof, which we leave out; for it would be a
  long diversion from the main theme of this paper.
  \end{Remark}

\subsubsection{Comparison with topological equivariant index results}
It is interesting to compare the results from the previous section
with purely topological equivariant index results.

In its most general filtered version, Theorem \ref{thm:1} (or Theorem
\ref{thm:1e}), which is the main focus of this paper, does not have a
topological counterpart: there is simply no topological space with
homology naturally isomorphic to the filtered equivariant homology
$\HF_*^{(a,\,b)}(H; \F)$.  Nor is there, to the best of our knowledge,
a map whose (equivariant) index would be related to the supertrace
\eqref{eq:str}.

With the local version of this theorem the situation is
different. Namely, in the setting of Corollary \ref{cor}, one can
associate to $x^k$ the equivariant index of $\varphi^k$ at $x^k$ which
is an ellement of the Burnside ring of $\Z_k$; see, e.g.,
\cite{Cr,Fe,LR} and references therein. Consider the image of this
element in the virtual representation ring $R(\Z_k)$. Then, by the
equivariant Lefschetz formula, the supertrace of the generator of this
representation is simply the Lefschetz index of the fixed point
$x(0)$; see, e.g., \cite{LR}. On the other hand, by arguing exactly as
in the proof of Theorem \ref{thm:1}, one can check that this virtual
representation is, up to the sign $(-1)^n$, equal to
$\HF_{\even}(x^k)-\HF_{\odd}(x^k)$. Thus, the information
carried by Corollary \ref{cor} is equivalent to its topological
counterpart. However, proving this essentially requires reproving
Theorem \ref{thm:1} and the direct proof of the theorem given here is
arguably simpler than the proof of the equivariant Lefschetz formula.

The global (i.e., unfiltered) version of Theorem \ref{thm:1} for
Hamiltonian diffeomorphisms is void: the $\Z_k$-action on the global
Floer homology is trivial. To see this observe that
  for a $C^2$-small autonomous Hamiltonian (with a regular
  time-independent almost complex structure) the action is equal to
  the identity at the chain level. On the other hand, in the unfiltered
  setting, the actions on the Floer homology groups of any two
  Hamiltonians are isomorphic to each other via continuation. Hence
  the action is trivial on the global Floer homology for all
  Hamiltonians.

For symplectomorphisms, the question is more interesting. Namely,
under suitable additional conditions, one can expect an analog of
Theorem \ref{thm:1} to hold for symplectomorphisms. One way to
approach the question is to adapt our proof of Theorem \ref{thm:1} to
symplectomorphisms. Alternatively, one can use the symplectic version
of Dold's trick to reduce the problem to the global
symplectomorphism version of Theorem \ref{thm:1e} which follows from
the results in \cite{To}; cf.\ Remark \ref{rmk:thm:1e}. These two
methods, however, would use different definitions of the $\Z_k$-action
on the Floer homology. In any event, hypothetically, the information
obtained in this way in Theorems \ref{thm:1} and \ref{thm:1e} would be
equivalent to what is delivered by the equivariant (or iterated)
Lefschetz formula.

\subsubsection{Smith inequality in local Floer homology}
Corollary \ref{cor} implies that if the Lefschetz index of an orbit
$x$ (as a fixed point) is non-zero, then the local Floer homology of
its isolated iterates with field coefficients cannot be zero, cf.\
Remark \ref{rmk:top}. For the homology of prime iterates $x^p$ with
$\F_p=\Z/p\Z$-coefficients a stronger result holds.

\begin{Theorem}[Smith Inequality in Local Floer Homology, I]
\label{thm:2} 
Let $M$ be a symplectic manifold and $x$ be an isolated one-periodic
orbit of a Hamiltonian on $M$. Then for all isolated prime iterates $x^p$ of
$x$, we have the following inequality of total dimensions:
\begin{equation}
  \label{eq:smith}
\dim \HF_*(x; \F_p) \leq \dim \HF_*(x^p; \F_p).
\end{equation}
\end{Theorem}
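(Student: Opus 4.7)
The plan is to identify the local Floer homology with the local Morse homology of a finite-dimensional generating function, to realize the $\Z_p$-action geometrically at that level, and then invoke the classical Smith inequality \eqref{eq:smith-intro}. The attractive feature of the local case is that the generating function sidesteps the need for any equivariant Floer-theoretic machinery.

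First, following Chaperon/Laudenbach--Sikorav/Viterbo, I would choose a discretized generating function $S\colon V\to\R$ on an open set $V\subset\R^N$ whose critical points near the origin are in bijection with one-periodic orbits of $H$ near $x$. By the standard comparison between local Floer homology and local Morse homology (see, e.g., \cite{Vi}), the relative homology $\H_*(W,W^-;\F_p)$ of a Gromoll--Meyer pair $(W,W^-)$ for the critical point $y$ of $S$ corresponding to $x$ agrees, up to a degree shift, with $\HF_*(x;\F_p)$.

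Next, I would form an iterated generating function $S^{(p)}\colon V^p\to\R$ for $\varphi_H^p$ by the standard composition rule, exploiting the fact that this construction is symmetric under the cyclic shift $g\colon(z_1,\ldots,z_p)\mapsto(z_2,\ldots,z_p,z_1)$. Thus $S^{(p)}$ is $\Z_p$-invariant; its critical points correspond to $p$-periodic orbits of $H$ (so the one at $y^{(p)}:=(y,\ldots,y)$ corresponds to $x^p$); and the restriction $S^{(p)}|_{\Delta}$ to the diagonal $\Delta\cong V$ equals, up to an overall factor, the original $S$. In particular, the $\Z_p$-fixed locus of critical points is exactly the set of critical points of $S$, and $y^{(p)}$ is itself a fixed critical point. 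Now pick a $\Z_p$-invariant Gromoll--Meyer pair $(W,W^-)$ for $S^{(p)}$ at $y^{(p)}$; such a pair exists by the standard averaging procedure. Then $\H_*(W,W^-;\F_p)\cong\HF_{*+c}(x^p;\F_p)$, while the fixed-point pair $(W^{\Z_p},(W^-)^{\Z_p})$ is a Gromoll--Meyer pair for $S^{(p)}|_{\Delta}\cong S$ at $y$, so $\H_*(W^{\Z_p},(W^-)^{\Z_p};\F_p)\cong\HF_{*+c'}(x;\F_p)$. Applying \eqref{eq:smith-intro} to the compact pointed $\Z_p$-space $X=W/W^-$, for which $X^{\Z_p}=W^{\Z_p}/(W^-)^{\Z_p}$, and noting that grading shifts do not affect total dimension, one obtains
\[
\dim\HF_*(x;\F_p)=\dim\H_*(X^{\Z_p};\F_p)\leq\dim\H_*(X;\F_p)=\dim\HF_*(x^p;\F_p).
\]

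The main obstacle is the careful construction of the $\Z_p$-equivariant generating function $S^{(p)}$ and the verification that its critical-point data genuinely recovers $\HF_*(x^p)$ while its restriction to the diagonal recovers $\HF_*(x)$, all with compatible grading conventions. A pleasant feature of the local setting, however, is that one does not need to identify the geometric $\Z_p$-action on $X$ with the canonical action $g_*$ on $\HF_*(x^p;\F_p)$ from Section \ref{sec:def}: the topological Smith inequality bounds the homology of the fixed set by the total homology for any $\Z_p$-action, so this comparison can be avoided entirely. This is precisely why the local version admits a much shorter argument than the filtered global result in \cite{ShZh}.
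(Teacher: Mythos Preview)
Your approach is correct and is essentially the direct generating-function argument that the paper explicitly mentions was used in its first version. The published proof takes a slightly different packaging: rather than building the iterated generating function $S^{(p)}$ on $V^p$ with its cyclic $\Z_p$-symmetry, the paper applies Dold's trick at the level of maps---passing to the symplectomorphism $\varphi_p\colon (x_1,\ldots,x_p)\mapsto(\varphi(x_2),\ldots,\varphi(x_p),\varphi(x_1))$ of $M^p$, which is $\Z_p$-equivariant---and then invokes the more general equivariant Theorem~\ref{thm:2g}. The latter is proved by choosing a single $G$-invariant Lagrangian complement, obtaining a $G$-invariant generating function for $\varphi_p$, and applying the classical Smith inequality to a $G$-invariant Gromoll--Meyer pair; the restriction to the fixed locus recovers a generating function for $\varphi$. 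So both routes reduce to the same core mechanism (generating functions $+$ invariant Gromoll--Meyer pair $+$ classical Smith), and the identification $\HF_*(\varphi_p,\bx)\cong\HF_*(\varphi^p,x^p)$ from \cite{LL} plays the role of your ``$S^{(p)}$ recovers $\HF_*(x^p)$'' step. What the paper's detour through Theorem~\ref{thm:2g} buys is a statement for arbitrary finite $p$-group actions, not just cyclic ones arising from iteration; what your direct approach buys is that one never leaves the Hamiltonian category and avoids the \cite{LL} isomorphism.
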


This is a local result and, as in Remark
  \ref{rmk:non-contractible}, without loss of generality we may assume
that $M=\R^{2n}$ and $x=0$ is a constant one-periodic orbit of a germ
of $H$ at $x$. As has been mentioned above, this theorem is also a
consequence of the results of \cite{ShZh}.

Theorem \ref{thm:2} can be further generalized to the equivariant
setting as follows. Consider an action of a finite $p$-group $G$ on
$\R^{2n}$ fixing $x=0$, and let
$\varphi\colon (\R^{2n},0)\to (\R^{2n},0)$ be the germ of a
$G$-equivariant symplectomorphism. As above, denote by $(\R^{2n})^G$
the fixed point set of $G$. Then $\varphi$ restricts to a
symplectomorphism $\varphi^G$ of
$(\R^{2n})^G$. Furthermore, since a symplectic action of
  a compact group is symplectically linearizable near a fixed point (see,
  e.g., \cite[Thm.\ 22.2]{GS}), without loss of generality we can assume that
the $G$-action is linear and thus $(\R^{2n})^G$ is a linear subspace
of $\R^{2n}$. It is not hard to see that $\varphi$ is necessarily
Hamiltonian and that a generating Hamiltonian $H_t$, $t\in S^1$, can
be taken $G$-invariant for all $t$ and such that $dH_t(0)=0$. Changing
notation slightly, denote the local Floer homology of $\varphi$ at $x$
by $\HF_*(\varphi, x; \F_p)$.

\begin{Theorem}[Smith Inequality in Local Floer Homology, II]
\label{thm:2g} 
Assume that $x=0$ is an isolated fixed point of $\varphi$. Then
\begin{equation}
  \label{eq:smith2}
\dim \HF_*(\varphi^G, x; \F_p) \leq \dim \HF_*(\varphi, x; \F_p).
\end{equation}
\end{Theorem}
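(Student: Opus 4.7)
The plan is to reduce the statement to the case of a cyclic group of prime order $G=\Z_p$ and then adapt the generating-function argument used for Theorem \ref{thm:2} to the equivariant setting, applying the classical Smith inequality \eqref{eq:smith-intro} to a local sublevel pair of a $G$-invariant generating function.

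First I would reduce to the case $|G|=p$ by induction on $|G|$. Since $G$ is a nontrivial finite $p$-group, its center is nontrivial and therefore contains a central, hence normal, subgroup $H\cong\Z_p$. The fixed subspace $(\R^{2n})^H$ is a symplectic linear subspace on which $G/H$ acts linearly and symplectically. Because $H_t$ is $G$-invariant and the flow preserves $(\R^{2n})^H$, the restriction $\varphi^H$ is a $G/H$-equivariant Hamiltonian symplectomorphism of $(\R^{2n})^H$ generated by $H_t|_{(\R^{2n})^H}$, and one has $\big((\R^{2n})^H\big)^{G/H}=(\R^{2n})^G$ together with $(\varphi^H)^{G/H}=\varphi^G$. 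Applying the base case of the theorem to $H\subset G$ and the inductive hypothesis to the $G/H$-action on $(\R^{2n})^H$ then yields
\[
\dim \HF_*(\varphi^G,x;\F_p)\le \dim \HF_*(\varphi^H,x;\F_p)\le \dim \HF_*(\varphi,x;\F_p).
\]

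So assume $G=\Z_p$. The next step is to construct a $G$-invariant generating function for $\varphi$. Using $G$-invariance of $H_t$ with $dH_t(0)=0$, I would factor $\varphi=\psi_N\circ\cdots\circ\psi_1$ as a composition of $G$-equivariant symplectomorphisms $C^1$-close to the identity (obtained by subdividing the Hamiltonian isotopy), each admitting a $G$-invariant Chaperon-type generating function $S_i$. The standard broken-geodesic assembly then produces a $G$-invariant function $F\colon V\to\R$ on a finite-dimensional real vector space $V$ carrying a linear $G$-action, whose critical points near $0\in V$ correspond bijectively to fixed points of $\varphi$ near $x$. By naturality of the construction, the restriction $F|_{V^G}$ is the analogous generating function for the restricted map $\varphi^G$ on $(\R^{2n})^G$. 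Using the identification of local Floer homology with the local generating-function homology employed in the proof of Theorem \ref{thm:2}, for a small $G$-invariant isolating neighborhood $U$ of $0$ and small $\epsilon>0$ the compact $\Z_p$-pair
\[
(X,A)=\big(\{F\le c+\epsilon\}\cap U,\,\{F\le c-\epsilon\}\cap U\big),\qquad c=F(0),
\]
has relative $\F_p$-homology equal to $\HF_*(\varphi,x;\F_p)$ up to a degree shift, and the fixed-point pair $(X^G,A^G)$ is precisely the corresponding pair for $F|_{V^G}$, so its relative $\F_p$-homology equals $\HF_*(\varphi^G,x;\F_p)$ up to a (possibly different) shift. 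Since degree shifts do not affect total dimension, the relative form of the classical Smith inequality \eqref{eq:smith-intro} applied to $(X,A)$ delivers \eqref{eq:smith2}.

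The technically delicate step is the equivariant generating-function construction: one must verify that the Chaperon broken-geodesic procedure can be carried out $G$-equivariantly and that restriction to $V^G$ yields exactly the analogous generating function for $\varphi^G$. This is where the symplectic linearizability of the $G$-action near $x$ (ensuring each short-time factor $\psi_i$ is $G$-equivariant with a $G$-invariant generating function) and the vanishing $dH_t(0)=0$ (needed to keep $x$ a critical point of $F$ after restriction) are used essentially.
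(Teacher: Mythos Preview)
Your strategy---reduce to $\Z_p$, build a $G$-invariant generating function, identify local Floer homology with local Morse homology of that function, and apply the classical relative Smith inequality---is exactly the paper's. The differences are in implementation. The paper avoids the broken-geodesic composition altogether: it constructs a \emph{single} $G$-invariant generating function $f\colon\triangle\to\R$ for $\varphi$ by exhibiting a $G$-invariant Lagrangian $N\subset\bar{\R}^{2n}\times\R^{2n}$ complementary to both the diagonal $\triangle$ and $\gr(D\varphi)$, and then checks that $N^G$ serves the same purpose for $\varphi^G$, so that $f|_{\triangle^G}$ generates $\varphi^G$. This sidesteps a genuine subtlety in your Chaperon approach: the usual $(q,P)$-type generating function presupposes a $G$-invariant Lagrangian polarization of $\R^{2n}$, which need not exist (e.g.\ $\Z_p$ rotating $\R^2$ for $p>2$), so to make your scheme equivariant you would in any case be forced back to a diagonal-based construction for each factor $\psi_i$. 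For the index pair the paper uses a $G$-invariant Gromoll--Meyer pair $(U,U_-)$, citing \cite{HHM} for existence, which makes both the identification with $\HM_*(f;\F_p)$ and the hypotheses of \eqref{eq:Smith-top} immediate; your sublevel pair is workable but needs a little more care to be a valid index pair. Finally, the paper carries out the $p$-group-to-$\Z_p$ reduction at the topological level (the generating function and Gromoll--Meyer pair are built for the full $G$, and only the Smith inequality itself is iterated along a normal series), whereas you induct at the Floer level; both are fine.
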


Theorem \ref{thm:2} can be proved directly by using generating
functions as it was done in the first version of this paper or
obtained from Theorem \ref{thm:2g} by applying Dold's trick; see
Section \ref{sec:eq-str}. We establish Theorem \ref{thm:2g} in Section
\ref{sec:smith}. The proof relies on the classical Smith theory. We
use generating functions to interpret $\HF_*(\varphi, x; \F_p)$ as the
singular homology of a pair of $G$-invariant subsets $(U, U_{-})$ of
$\R^{2n}$ and $ \HF_*(\varphi^G, x; \F_p)$ as the singular homology of
the fixed point set $(U^{G}, U_{-}^{G})$ of this action. Then Theorem
\ref{thm:2} follows from the relative version of the Smith inequality,
\eqref{eq:smith-intro}, applied to this pair:
\[
\dim \H_*(U^{G}, U_{-}^{G}; \F_p) \leq \dim \H_*(U, U_{-} ; \F_p).
\]
Finally note that when $G=\Z/\Z_2$ and $p=2$, a proof of Theorem
\ref{thm:2g} can be extracted from the results in \cite{SS}
establishing a variant of the Smith inequality for Lagrangian Floer
homology in the case of involutions.

\begin{Remark}
  \label{rmk:p->pl}
  In Theorem \ref{thm:2} the order of iteration is set to be equal
  exactly $p$ only for the sake of notational convenience. By Theorem
  \ref{thm:2g}, one can replace the iteration order $p$ by its power
  $p^\ell$, $\ell\in\N$, while still keeping the coefficient field
  $\F_p$. This, however, is also a formal consequence of Theorem
  \ref{thm:2}:
  $$
  \dim \HF_*(x; \F_p) \leq \dim \HF_*(x^p; \F_p) \leq \dim
  \HF_*(x^{p^2}; \F_p)\leq ...
  $$
  as long as $x^{p^\ell}$ are isolated, where in the second inequality
  we applied \eqref{eq:smith} to $x^p$, etc.
\end{Remark}

\medskip

\noindent{\bf Acknowledgements.} The authors are grateful to Daniel
Cristofaro-Gardiner, Ba\c sak G\"urel, Lois Hern\'andez-Corbato, Marco
Mazzucchelli, Egor Shelukhin and Jingyu Zhao for useful
discussions. The authors would also like to thank the referee for
calling their attention to the equivariant versions of these results.

\bigskip

\section{Preliminaries}
\label{sec:prelim}

\subsection{Conventions and basic definitions} 
\label{sec:notation}

Let, as above, $(M, \omega)$ be a closed symplectic manifold and let
$H \colon S^1\times M \rightarrow \R$ be a one-periodic in time
Hamiltonian on $M$. Here we identify $S^1$ with $\R/\Z$ and in what follows set $H_t :=H(t, \cdot)$. The
Hamiltonian vector field $X_H$ of $H$ is defined by
$i_{X_H}\omega= -dH$. The time-one map of the time-dependent flow of
$X_H$ is denoted by $\varphi_H$ and referred to as a \emph{Hamiltonian
  diffeomorphism}. In this paper we work with iterated Hamiltonians.
By $k$-th iteration $H^{\nat k}$ of $H$, we simply mean $H$ treated as
$k$-periodic. (We acknowledge the fact that strictly speaking it is
the map $\varphi_H$ rather than the Hamiltonian $H$ that is iterated.)
Time-dependent flows of $H$ and $H^{\nat k}$ agree and the time-$k$ map of
the latter is equal to $\varphi_H^k$.

A \emph{capping} of a contractible loop $x \colon S^1 \rightarrow M$
is a map $A \colon D^2 \rightarrow M$ such that $A\vert_{S^1} =x$. The
action of a Hamiltonian $H$ on a capped closed curve $\bx=(x,A)$ is
\[
\Aa_H(\bx)=-\int_A \omega + \int_{S^1} H(t,x(t)) \, dt.
\]
When $\omega\,\vert_{\pi_2(M)}=0$, the action $\Aa_H(\bx)$ is independent of
the capping. The critical points of $\Aa_H$ on the space of capped
closed curves are exactly the capped one-periodic orbits of $X_H$. The
set of critical values of $\Aa_H$ is called the \emph{action spectrum}
$\Ss (H)$ of $H$. These definitions extend to Hamiltonians of any
period in an obvious way. Note that the action functional is
homogeneous with respect to iteration, i.e., for iterated capped
orbits $\bx^k$ we have
\[
\Aa_{H^{\nat k}}(\bx^k)=k\Aa_H(\bx).
\]

A periodic orbit $x$ of $H$ is called \emph{non-degenerate} if the
linearized return map
$D\varphi_H \colon T_{x(0)}M \rightarrow T_{x(0)}M$ has no eigenvalues
equal to one. The \emph{Conley-Zehnder index} $\MUCZ (\bx) \in \Z$ of
a non-degenerate capped orbit $\bx$ is defined as in \cite{Sa,SZ}. The
index satisfies the determinant identity
\begin{equation}
\label{eq:cz-det}
\sgn\big( \det(D\varphi_H(x(0))-I)\big)= (-1)^{n-\MUCZ(\bx)},
\end{equation}
where $\dim M =2n$ (see Section 2.4 in \cite{Sa}), and is
independent of capping when $c_1(TM)\,\vert_{\pi_2(M)}=0$.

A symplectic manifold $(M, \omega)$ is called \emph{symplectically
  aspherical} if both $\omega$ and $c_1(TM)$ vanish on $\pi_2(M)$. In
this case we drop capping from the notation of the action and the
index.

\subsection{Floer homology and orientations} 
\label{sec:floer}  
In this section we recall the basics of filtered (and local) Floer
homology and orientations in Hamiltonian Floer homology. We refer the
reader to, e.g., \cite{FO, GG, HS, MS, Sa, SZ} for a detailed
treatment of Floer homology and to \cite{Ab, FH, Za} for a thorough
discussion of orientations.

\subsubsection{Filtered and local Floer homology} 
\label{sec:floer} Let, as above, $(M,\omega)$ be a closed
symplectically aspherical manifold and
$H \colon S^1\times M \rightarrow \R$ be a non-degenerate one-periodic
Hamiltonian on $(M, \omega)$. For a generic time-dependent almost
complex structure $J$, the pair $(H,J)$ satisfies the transversality
conditions; see \cite{FHS}.  Pick two points $a$ and $b \in \R$ not in
the action spectrum $\Ss(H)$ of $H$. The filtered Floer homology
$\HF_*^{(a,\,b)}(H)$ of $H$ is defined as the homology of the Floer
chain complex $\CF_*(H, J)$ restricted to the generators $x$ with
$\Aa_H(x) \in (a,\,b)$. (We omit the ground ring from the notation of
the homology when it is not essential.) If $H$ is degenerate, we take
a $C^2$-small non-degenerate perturbation $\tilde{H}$ of $H$ such that
$a$ and $b \notin \Ss(\tilde{H})$ and define $\HF_*^{(a,\,b)}(H)$ as
the filtered Floer homology $\HF_*^{(a,\,b)}(\tilde{H})$ of $\tilde{H}$.

The local Floer homology $\HF_*(H,x)$ (or just $\HF_*(x)$) of an
isolated one-periodic orbit $x \colon S^1 \rightarrow M$ is defined as
in \cite{Gi:CC,GG,GG:gap}. Recall that for a
  degenerate orbit $x$, similarly to the filtered case, $\HF_*(H, x)$ is
  defined as the homology of the Floer chain complex of a $C^2$-small
  non-degenerate perturbation $\tilde{H}$ of $H$, restricted to the
  generators contained in an isolating neighborhood of $x$. Note that
here we do not require $x$ to be contractible; see Remark
\ref{rmk:non-contractible}. The absolute grading of $\HF_*(H,x)$
depends on the choice of a trivialization of
$TM|_x$. In Section \ref{sec:def}, introducing the $\Z_k$-action on
filtered Floer homology, we imposed the condition that $(M,\omega)$ is
symplectically aspherical. In the case of local Floer homology this
assumption on $(M,\omega)$ is not needed. The reason is that after
composing $\varphi_H^t$ with a local loop of Hamiltonian
diffeomorphisms we can always assume that $x$ is a constant orbit of a
local Hamiltonian diffeomorphism of $(\R^{2n}, \omega_0)$; cf.\ Remark
\ref{rmk:non-contractible}.

\subsubsection{Orientations}
\label{sec:orient}
We describe the setup following \cite{Ab} and \cite{Za}. Let
$(M^{2n},\omega)$ be a closed symplectically aspherical manifold and
$H$ be a one-periodic in time Hamiltonian on $(M, \omega)$. For every
contractible one-periodic orbit $x$ of $H$, we choose a unitary
trivialization $\Psi$ of $TM$ along $x$ that comes from a capping of
$x$. We emphasize that, while choosing $\Psi$, one can work with any
complex structure $J$ on $TM$ along the capping.  Using $\Psi$, we
can linearize the Hamiltonian vector field along $x$ and write it as
$J_0S$, where $J_0$ is the multiplication by $i$ on $\C^n=\R^{2n}$ and
$S$ is a map from $S^1$ to the space of (symmetric) $2n\times 2n$ real
matrices. Set 
\begin{equation}
  \label{eq:D}
D=J_0\partial_t + S,
\end{equation}
where $t\in S^1$; see \cite[Sect.\ 2.2]{Sa}. This is a self-adjoint
first-order differential operator on $\R^{2n}$-valued functions on $S^1$.

Denote by $L^p\big(T^{(0,1)}\C\otimes\R^{2n}\big)$ the space of
$L^p$-sections of the bundle $T^{(0,1)}\C\otimes\R^{2n}$ over $\C$,
where $p>2$. Following \cite{Ab} (see also, e.g., \cite{Sc} for the
analysis details), consider an ``extension'' of $D$ to $\C$ given by the operator
\begin{align}
  \label{eq:tD}
\tilde{D}\colon W^{1,p}(\C, \R^{2n}) &\rightarrow L^p\big(T^{(0,1)}\C\otimes\R^{2n}\big)\nonumber
  \\
  \tilde{D}(X) &=\bar{\p}X +  \big( B\cdot X \big) \otimes d \bz,
\end{align}
where $B\colon \C\to \GL(2n)$ is asymptotic to $S$ in the following sense. Namely,
setting $z=e^{-s-2\pi it}$ and fixing the section $-d\bar{z}/\bar{z}$ of
$T^{(0,1)}\C$ over the cylinder $\R\times S^1=\{z\neq 0\}$, we can rewrite
$\tilde{D}$ as the operator
\begin{equation}
  \label{eq:tD2}
\tilde{D}_\infty(X) =\partial_s X + J_0 \partial_tX + B_\infty\cdot X
\end{equation}
on $\R^{2n}$-valued functions on the cylinder, where $B_\infty$ is a
function $\R\times S^1\to \GL(2n)$. In other words, 
  $$
  \tilde{D}=\tilde{D}_\infty\otimes\left(-\frac{d\bz}{\bz}\right)
  $$
over the cylinder $\{z\neq 0\}$. Then we require that
\[
B_\infty(e^{-s-2\pi i t})=S(t)
\] 
for $s \ll 0$. The behavior of $B$ outside a neighborhood of infinity
is inessential. Furthermore, the measure on $\C$ used in the
definition of the domain and the target of $\tD$ is required to be 
$ds\,dt$ near infinity. (It is worth pointing out that
$\tD_\infty$ defined by \eqref{eq:tD2} as a differential operator from
functions to functions extends to $\C$, but the extension is not
elliptic regardless of the choice of $B_\infty$: the leading term is
$-\bar{z}\p_{\bar{z}}$.
The relation between $B$ and $B_{\infty}$ is given by $B_\infty=-\bar{z}B$.)

Since $x$ is non-degenerate, $\tilde{D}$ is a Fredholm operator and
the index $\ind(\tD)$ of $\tilde{D}$ is equal to $n-\MUCZ (x)$;
see \cite{Sc}. Let
\[
\det(\tilde{D})=\det\big(\coker^*(\tilde{D}))\otimes \det(\ker(\tilde{D})\big)
\]
be the determinant line for the extended operator $\tilde{D}$. An
orientation choice for an orbit $x$ is an orientation choice for the
determinant line $\det(\tilde{D})$. (Here the determinant of a
finite-dimensional vector space is its top wedge power and
$\det(0)\cong\R$.)  The construction of $\tD$ depends on some
auxiliary data (the symplectic trivialization and the almost complex
structure along the capping, and $B$) forming a contractible space
$\CB$. Then the collection of determinant lines
$\det(\tilde{D})$ becomes a real line bundle $\LL\to\CB$: the
determinant bundle. This bundle is trivial since $\CB$ is contractible
and a choice of an orientation of one fiber is equivalent to a choice
of an orientation of~$\LL$.

Let us pick the complex structure in the construction of $\tD$, which
agrees along $x$ with an almost complex structure $J$ on $M$ such that
the pair $(H,J)$ is regular. Then a Floer trajectory connecting orbits
$x$ and $y$ gives rise, via gluing, to a canonical (up to
multiplication by a positive number) map between the determinant lines
$\det(\tilde{D}_{x})$ and $\det(\tilde{D}_{y})$, where the operators
$\tD_x$ and $\tD_y$ are associated with the orbits $x$ and $y$; see
\cite{Ab}. This map is then used to determine the signs in the Floer
differential by comparing the chosen orientations.

Next, let us consider the equivariant version of this construction,
which we will use in the proof of Theorem \ref{thm:1e}. Namely assume
that a finite group $G$ acts by symplectic transformations on $M$ and
that $H$ is $G$-invariant. Then every $g\in G$ sends one-periodic
orbits to one-periodic orbits and induces a homeomorphism
$\CB_x\to\CB_{g(x)}$ together with a natural lift
$\LL_x\to\LL_{g(x)}$. Depending on whether this lift is orientation
preserving or reversing we set $g\cdot x=\pm g(x)$ in the map
$\CF_*(H,J)\to \CF_*(H,g_*J)$ of the Floer complexes. Composing this
map with a continuation map from $(H, J)$ to $(H,g_*J)$ constant in
$H$, we obtain a $G$-action on the filtered Floer homology of $H$. In
what follows, we will need to see more explicitly how the sign, which
we denote by $\sgn(g)$, is calculated when $g(x)=x$, i.e., $x$ is an
orbit invariant under the $G$-action.  It turns out that $\sgn(g)$ is
determined by a certain component of the flow along $x$; see Lemma
\ref{lemma:index2}.

Note that to calculate this sign we can use any, not necessarily
regular, almost complex structure along the capping of $x$. For
instance, if the capping is $G$-invariant we can work with a
$G$-invariant auxiliary data. In this case the calculation can be best cast
in the following somewhat formal framework. Let $G$ be a finite group
acting by symplectic linear transformations on $\R^{2m}$ and let
$\Phi_t$ be a linear $G$-equivariant time-dependent flow. We set
$$
S=-J_0\dot{\Phi}_t\Phi_t^{-1}
$$
to be the matrix of a quadratic $G$-invariant, one-periodic in time
Hamiltonian on $\R^{2m}$ generating $\Phi_t$. The only requirement we
impose is that the time-one map $\Phi_1$ is non-degenerate, i.e., $\det (\Phi_1-I) \neq 0$.

Consider the operator $D$ associated with this data as in \eqref{eq:D}
and denote by $\tD$ its extension defined by \eqref{eq:tD}, where $B$
is $G$-invariant. Then $\tD$ is again Fredholm with index
$\ind(\tD)=m-\MUCZ(\Phi)$ and, by construction, $\tD$ is
equivariant. As a consequence, $G$ acts on $\det(\tD)$ and we obtain a
well-defined homomorphism $\sgn\colon G\to\Z_2=\{1,\,-1\}$ given by
the sign of this action.

Three standard remarks are due. The first one is that this
homomorphism is invariant under an equivariant homotopy of $\Phi_t$
(or of $S$) as long the time-one map $\Phi_1$ remains
non-degenerate. Secondly, when $S$ and the underlying
symplectic vector space decompose as a direct sum, $\det(\tD)$
decomposes as a tensor product, and the $\sgn$ homomorphism is the
product of the sign homomorphisms for the individual terms. In other
words, the construction is multiplicative with respect to direct
sum. Finally, if we replace the finite group $G$ by a compact Lie group,
the construction still goes through and the $\sgn$ homomorphism is
continuous. In particular, it is trivial, i.e., identically equal to
$1$, when the group is connected.

\begin{Example}
  \label{ex:circle}
  Assume that $m=1$, the flow $\Phi_t$ is the rotation by $t\theta$ and
  $G$ is a cyclic group whose generator $g$ acts by a rational rotation of
  $\R^2$ by some angle possibly different from $\theta$. Then
  $\sgn(g)=1$. Indeed, in this case we can extend the $G$-action to an
  $S^1$-action and apply the previous remark.
\end{Example}

We will need the following

\begin{Lemma}
  \label{lemma:index}
  Assume that $G$ is cyclic with generator $g$ and that the fixed
  point set $(\R^{2m})^G$ is zero, i.e., $g$ has no eigenvectors
  with eigenvalue $1$. Then
  \begin{equation}
    \label{eq:index}
  \sgn(g)=(-1)^{m-\MUCZ(\Phi)}=(-1) ^{\ind(\tD)}.
\end{equation}
\end{Lemma}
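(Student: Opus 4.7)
The plan is to reduce to two fundamental cases via an isotypic decomposition of $g$. Because $\Phi_t$ commutes with $g$, it preserves the real isotypic decomposition $\R^{2m} = \bigoplus_\alpha V_\alpha$, where $V_\alpha$ is the sum of eigenspaces of $g\otimes \C$ for the eigenvalue pair $\{e^{\pm i\alpha}\}$ and $\alpha \in (0,\pi]$ (the value $\alpha=0$ is excluded by the hypothesis $(\R^{2m})^G = 0$). The summands $V_\alpha$ are mutually symplectically orthogonal: if $u\in V_\alpha$ and $v\in V_\beta$ with $\alpha \neq \beta$, then $\omega(u,v) = \omega(gu,gv)$ and comparing eigenvalues of $g\otimes g$ forces $\omega(u,v)=0$. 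By the multiplicativity of $\sgn(g)$ under direct sums and the additivity of $\ind(\tD)$, it suffices to verify the formula on a single $V_\alpha$.

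First, if $\alpha = \pi$, then $g = -I$ on $V_\pi$. The operator $\tD$ acts on $V_\pi$-valued functions on $\C$, and $g$ acts on them pointwise by $-I$. Hence $g$ acts as $-I$ on the finite-dimensional spaces $\ker(\tD)$ and $\coker(\tD)$, and therefore as the scalar $(-1)^{\dim\ker(\tD) - \dim\coker(\tD)} = (-1)^{\ind(\tD)}$ on the determinant line $\det(\tD) = \det(\ker(\tD)) \otimes \det(\coker(\tD))^*$. This yields $\sgn(g) = (-1)^{\ind(\tD)}$ as desired.

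Next, if $\alpha \in (0,\pi)$, set $J := (g - \cos\alpha\cdot I)/\sin\alpha$ on $V_\alpha$. Since $g$ satisfies the minimal polynomial $g^2 - 2\cos\alpha \cdot g + I = 0$ on $V_\alpha$, a direct computation gives $J^2 = -I$; using $\omega(gu,gv) = \omega(u,v)$ and $g + g^{-1} = 2\cos\alpha$ then yields $\omega(Ju,Jv) = \omega(u,v)$. So $J$ is a symplectic complex structure on $V_\alpha$, and the circle $\{\cos\theta\cdot I + \sin\theta\cdot J : \theta \in \R/2\pi\Z\} \subset \Sp(V_\alpha)$ contains $g$. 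Since $\Phi_t$ commutes with $g$ it commutes with $J$, hence with this entire $S^1$, so the $G$-action on the Fredholm data extends to an $S^1$-action equivariantly homotopic to the $G$-action. As $\sgn$ is continuous on a compact connected Lie group, it must be trivial there, giving $\sgn(g) = 1$. Separately, $\Phi_1 - I$ is $\C$-linear with respect to $J$, so $\det{}_\R(\Phi_1 - I) = |\det{}_\C(\Phi_1 - I)|^2 > 0$ by non-degeneracy, and the determinant identity \eqref{eq:cz-det} forces $m - \MUCZ(\Phi)$ to be even. Thus $(-1)^{\ind(\tD)} = 1$ as well, completing this case.

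The main technical point is the construction of $J$ in the second case and the verification that it is both an almost complex structure and symplectic; crucially, we never need $J$ to be $\omega$-compatible in the positive-definite sense, which is important because such compatibility need not hold on $V_\alpha$. Once $J$ is in hand, the connected-group triviality of $\sgn$ recorded just before the lemma and the elementary determinant identity close out each summand, and multiplicativity assembles the full statement from the two irreducible-type cases.
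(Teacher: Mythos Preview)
Your proof is correct, but it takes a genuinely different route from the paper's. The paper never decomposes the domain $\R^{2m}$; instead it observes directly that since $(\R^{2m})^G=0$ and $\tD$ is $G$-equivariant, the induced $G$-actions on $\ker\tD$ and $\coker\tD$ also have trivial fixed-point sets, and then applies a one-line linear algebra fact: whenever a cyclic group acts on a finite-dimensional real vector space $V$ with $V^G=0$, a generator acts on $\det V$ by $(-1)^{\dim V}$ (proved by splitting $V$ into the $(-1)$-eigenspace and a sum of two-dimensional rotation blocks). Applying this to both $\ker\tD$ and $\coker\tD$ gives $\sgn(g)=(-1)^{\dim\ker\tD+\dim\coker\tD}=(-1)^{\ind(\tD)}$ in one stroke. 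Your argument, by contrast, decomposes the \emph{source} symplectically into isotypic pieces $V_\alpha$ and treats each separately: for $\alpha=\pi$ you essentially reproduce the paper's computation, while for $\alpha\in(0,\pi)$ you build a complex structure $J$ out of $g$, embed $g$ in a symplectic $S^1$ commuting with $\Phi_t$, invoke the connected-group triviality of $\sgn$, and then separately verify $(-1)^{\ind(\tD)}=1$ via the determinant identity \eqref{eq:cz-det}. The paper's approach is shorter and needs neither the $S^1$-extension trick nor the Conley--Zehnder determinant identity; yours is more hands-on with the symplectic structure, makes Example~\ref{ex:circle} do real work, and has the pleasant feature of verifying the two sides of \eqref{eq:index} independently on each isotypic block.
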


\begin{proof}
Note that we only need to prove that $\sgn(g)$ is
    equal to either one of the two terms on the right. We will focus
    on the second one: $\sgn(g)=(-1) ^{\ind(\tD)}$. Then
    \eqref{eq:index} is a consequence of the general fact that
    whenever a cyclic group $G$ acts on a finite-dimensional real
    vector space $V$ with $V^G=\{0\}$, a generator $g$ of $G$ acts by
    multiplication by $(-1)^{\dim V}$ on the determinant line
    $L=\det(V)$. Indeed, $V$ decomposes as a direct sum of two
    invariant subspaces $V'$ and $V''$ such that, on $V'$, the
    generator $g$ acts as multiplication by $-1$ and $V''$ is a direct
    sum of other irreducible representations of $G$. In other words,
    $V''$ is a direct sum of two-dimensional subspaces on which $g$
    acts by rotation in an angle other than $\pi$. Thus the action of
    $g$ on $V''$ is orientation preserving and $\dim V''$ is even. It
    follows that $g$ acts on $L=\det(V')\otimes\det(V'')$ 
    by multiplication by $(-1)^{\dim V'}=(-1)^{\dim V}$.

 Applying this observation to $V=\ker\tD$ and
    $V=\coker \tD$ and using the fact that by definition
  $$
  \ind(\tD)=\dim\ker(\tD)-\dim\coker(\tD)
  $$
  we obtain \eqref{eq:index}.
  \end{proof}

  Now we are in a position to return to the question of calculating
  the sign for an orbit $x$ fixed by the action. Thus, let $g$ be a
  generator of a cyclic group $G$ acting by symplectic tranformations
  on $M^{2n}$ and let $H$ be a $G$-invariant Hamiltonian. Then the
  flow $\varphi_H^t$ restricts to the Hamiltonian flow on $M^G$
  generated by $H^G:=H\mid_{M^G}$.  Consider a one-periodic orbit $x$
  of $H$ fixed by the $G$-action. Then $x$ necessarily lies in $M^G$
  and when $x$ is viewed as a periodic orbit of $H^G$ we denote it by
  $x^G$.  Recall also that by definition $g\cdot x=\sgn(g) x$ in the
  map of the Floer complexes.

  \begin{Lemma}
    \label{lemma:index2}
    Assume that $x^G$ is contractible in $M^G$. Then
    \begin{equation}
      \label{eq:index2}
    \sgn(g)=(-1)^{m-\MUCZ(x)+\MUCZ(x^G)},
  \end{equation}
  where $2m$ is the codimension of the connected component of $M^G$
  containing $x$.
  \end{Lemma}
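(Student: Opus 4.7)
The plan is to reduce the computation of $\sgn(g)$ to the linear setting of Lemma \ref{lemma:index} by splitting the linearized flow along $x$ into its tangential and normal parts with respect to $M^G$, and then to relate the resulting exponents by the additivity of the Conley--Zehnder index under symplectic direct sum.

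First, since $x^G$ is contractible in $M^G$, one can choose a capping $A\colon D^2\to M^G$ of $x^G$. Along $A$, the symplectic splitting $TM|_A = TM^G|_A \oplus N|_A$, where $N$ is the symplectic normal bundle of the component of $M^G$ containing $x$, is $G$-invariant: $G$ acts trivially on the first summand, and the $G$-fixed subspace of the second summand is $\{0\}$. Because $A$ is contractible and all structures are $G$-equivariant, one can pick a $G$-equivariant unitary trivialization $\Psi$ of $TM|_A$ respecting this splitting. Restricting $\Psi$ to $x$ produces a framing of $TM|_x$ that simultaneously trivializes $TM^G|_{x^G}$ via a capping inside $M^G$ (used to define $\MUCZ(x^G)$) and $TM|_x$ via the induced capping in $M$ (used to define $\MUCZ(x)$), and in which the linearized Hamiltonian flow splits as $\Phi_t = \Phi_t^G\oplus\Phi_t^N$. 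Consequently $S$, $D$, and the extension $\tilde{D}$ all split accordingly: $\tilde{D}=\tilde{D}^G\oplus \tilde{D}^N$.

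Second, by the multiplicativity of $\sgn$ under direct sum recorded after Example \ref{ex:circle},
\[
\sgn(g) = \sgn(g)\big|_{TM^G}\cdot \sgn(g)\big|_{N}.
\]
The first factor equals $1$ since $G$ acts trivially on $\tilde{D}^G$ and hence on $\det(\tilde{D}^G)$. For the second factor, the linearized flow on $N$ is non-degenerate (being the quotient of the non-degenerate linearizations of $x$ and $x^G$) and the $G$-action on $N_{x(0)}$ has no fixed vector by construction. Hence Lemma \ref{lemma:index} applies to the normal part and yields
\[
\sgn(g) \;=\; \sgn(g)\big|_{N} \;=\; (-1)^{m-\MUCZ(\Phi^N)}.
\]

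Third, by the standard additivity of the Conley--Zehnder index under symplectic direct sum, computed with respect to the compatible trivializations above,
\[
\MUCZ(x) \;=\; \MUCZ(x^G) + \MUCZ(\Phi^N),
\]
so substituting $\MUCZ(\Phi^N)=\MUCZ(x)-\MUCZ(x^G)$ into the previous formula gives exactly \eqref{eq:index2}. The main technical point is the construction of the $G$-equivariant trivialization of $TM$ along the capping that restricts simultaneously to compatible trivializations of $TM^G|_{x^G}$ and $N|_x$, ensuring that the three Conley--Zehnder indices appearing in the statement are computed consistently; once this is arranged, the remainder is a clean combination of Lemma \ref{lemma:index} with the additivity of $\MUCZ$.
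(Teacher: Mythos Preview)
Your proof is correct and follows essentially the same approach as the paper's: split $TM$ over a capping in $M^G$ into the tangential and normal parts, use the equivariant trivialization to reduce to the linear setting, observe that the tangential contribution to $\sgn(g)$ is trivial, apply Lemma~\ref{lemma:index} to the normal part, and finish with the additivity of $\MUCZ$. The paper makes one point slightly more explicit than you do---namely, that the equivariant trivialization of the normal bundle over the capping exists by rigidity of compact group actions---but you correctly flag this as the main technical ingredient.
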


  \begin{proof}Fix a capping $A\colon D^2\to M$ of
      $x^G$ in $M^G$. Both the pull-back $A^*TM^G$ of the tangent
      bundle $TM^G$ and the pull-back $A^*(TM^G)^\omega$ of the
      symplectic orthogonal bundle to $M^G$ are trivial over
      $D^2$. Moreover, since actions of compact groups are rigid (see,
      e.g., \cite[Appendix B]{GGK}), $A^*(TM^G)^\omega$ is
      equivariantly trivial, i.e., it is equivariantly isomorphic to
      the product $D^2\times V$, where $V$ is a real symplectic
      representation of $G$ such that $V^G=\{0\}$. Let us fix a
      trivialization of $TM^G$ and an equivariant trivialization of
      the normal bundle over the capping.

   The flow $D\varphi_H^t$ along $x$ respects the
      direct sum decomposition $T_{M^G}M=TM^G\oplus (TM^G)^\omega$.
      Let $\Phi_t$ be the normal component of $D\varphi_H^t|_x$,
      viewed as a family of symplectic maps $V\to V$ commuting with
      the $G$-action. Operator $D_x$ splits as a direct sum of two
      operators: one for the tangent component
      $D\varphi_{H^G}^t\mid_{x^G}$ of the flow and one for the normal
      component $\Phi_t$. Hence, one can also chose a split extension
      $\tilde{D}_x$ of $D_x$, and $\sgn(g)$ is the product of the
      signs of $g$ for these two extensions. The $G$-action on the
      part tangent to $M^G$ is trivial, and hence the sign coming from
      that component is $+1$. Therefore, $\sgn(g)$ is equal to the
      sign coming from the normal component, which is determined by
      \eqref{eq:index}.

    Since the Conley--Zehnder index is additive, we have
    $\MUCZ(\Phi)=\MUCZ(x)-\MUCZ\big(x^G\big)$, and the lemma follows
    from \eqref{eq:index}.
  \end{proof}

\section{$\Z_k$-action on the iterated Floer homology}
\label{action}

\subsection{Definition of the action}
\label{sec:def}
In this section we recall the definition of the $\Z_k$-action on the
$k$-iterated Floer homology, treating in detail the role of
orientations and sign changes -- a point of particular importance to
us here. The construction is quite similar to the $G$-action on the
Floer homology in the equivariant setting discussed in Section
\ref{sec:orient}. (In fact, one can use Dold's trick to reduce the
iterated case to the equivariant one, but the resulting description of
the action is more involved and relies on the Floer homology of
symplectomorphisms rather than Hamiltonian diffeomorphisms making the
action filtration more difficult to work with.)

As before, let $(M,\omega)$ be a closed symplectically aspherical
manifold and $H \colon S^1\times M \rightarrow \R$ be a one-periodic
in time Hamiltonian on $(M, \omega)$. Denote by $H^{\nat k}$ the $k$th
iteration of $H$. Our goal is to define, for $a$ and $b$ not in
$\CS(H^{\nat k})$, a generator
\[
g \colon \HF_*^{(a,\,b)} (H^{\nat k}) \rightarrow  \HF_*^{(a,\,b)} (H^{\nat k})
\]
of the action and show that $g^k=\id$. The definition works for any
coefficient ring, and it extends word-for-word to the local Floer
homology $\HF_*(x^k)$ of an isolated iterated orbit $x^k$.

Fix such $a$ and $b$. If the iterated Hamiltonian $H^{\nat k}$ is
degenerate, we perturb $H$ so that the $k$th iteration of the
perturbed Hamiltonian $\tilde{H}^{\nat k}$ is non-degenerate and
$C^2$-close to $H^{\nat k}$. Let $J$ be a $k$-periodic almost complex
structure, which is one-periodic in a tubular neighborhood of every
$k$-periodic orbit of $\tilde{H}^{\nat k}$. For a generic almost
complex structure $J$ in this class, the pair
$(\tilde{H}^{\nat k}, J)$ satisfies the transversality conditions (see
\cite[Rmk.\ 5.2]{FHS}). On the chain level, the generator $g$ is the
composition $CR$ of the time-shift map
\begin{align*}
  R \colon \CF_*^{(a,\,b)}\big(\tilde{H}^{\nat k}_t, J_t\big)
  &\rightarrow \CF_*^{(a,\,b)}\big(\tilde{H}^{\nat k}_{t+1}, J_{t+1}\big) 
  \\
  x(t) &\mapsto \pm \,x(t+1)
\end{align*}
with a continuation map
\[
  C \colon \CF_*^{(a,\,b)}\big(\tilde{H}^{\nat k}_{t+1}, J_{t+1}\big)
  \rightarrow  \CF_*^{(a,\,b)}\big(\tilde{H}^{\nat k}_t, J_t\big)
\]
which is constant in the Hamiltonian:
$\tilde{H}^{\nat k}_{t+1}=\tilde{H}^{\nat k}_t$. (Here and throughout
we use the same notation $g$ for the chain map and the induced map in
homology.) Note that the domain and the target complexes are identical
as graded vector spaces, but in general not as complexes. Furthermore,
these Floer complexes come with a preferred basis and $R$ sends
generators to generators, up to a sign.  When $\F=\Z_2$, the
time-shift map $R$ is clearly a chain map since all the data,
including Floer trajectories, is just shifted in time. If $J$ is
one-periodic in time, the domain and the target complexes agree and we
have $C=\id$. Then $g=R$ is obviously $k$-periodic already on the
level of complexes. This need not be the case in general.

Below we will discuss how the signs in the time-shift map are
determined and then show in Proposition \ref{prop:action} that $CR$
induces a $k$-periodic map, i.e., a $\Z_k$-action, on the level of
homology; cf.\ \cite[Lemma 2.6]{To}. (A proof of the proposition can
also be found in, e.g., \cite{PS,Zh}, but there the signs are
implicit.)

Let us first explain how to fix the orientations for periodic orbits
generating the domain $\CF_*^{(a,\,b)}(\tilde{H}^{\nat k}_t, J_t)$ of
$g$. For the orbits that are time-shifts of each other, e.g. $x(t)$
and $x(t+1)$, using the same capping we choose a one-periodic trivialization of $J$ along the orbits. Morse precisely, we take a one-periodic $\tilde{J}$ on the capping that is equal to $J$ along the orbit and then choose a one-periodic trivialization for $\tilde{J}$. Since all the
data is one-periodic in tubular neighborhoods of the orbits, the
asymptotic operator for $x(t+1)$ becomes the time-shift of the
asymptotic operator $D_x$ for $x(t)$. Hence we may use the time-shift
of the extended operator (see Section \ref{sec:orient}) for $x(t)$ to
determine the orientation line for $x(t+1)$. We choose extended
operators for un-iterated $k$-periodic orbits according to the rule
above. If a periodic orbit is iterated, we require its extended
operator to have the same period as the orbit.  In other words, we
extend asymptotic operators by preserving their minimal period. Once
the extended operators are fixed, we choose any orientation of their
determinant lines.

In the target $\CF_*^{(a,\,b)}(\tilde{H}^{\nat k}_{t+1}, J_{t+1})$ we
use the same extended operators and we make the same orientation
choices as above. Now we have a natural time-shift map 
\[
R_* \colon \det( \tilde{D}_{x(t)}) \rightarrow \det( \tilde{D}_{x(t+1)})
\]
between the
determinant lines for $x(t)$ and $x(t+1)$. Using the map $R_*$ we compare
the chosen orientations. The sign of $R$ is positive if $R_*$ is orientation preserving and
negative otherwise. Next we show that  the induced map $g$ in
homology generates a $\Z_k$-action.

\begin{Remark} In our choice of the orientation data, $k$-iterated
  orbits in the domain and the target of $R$ occur with the same
  orientation. We could have chosen the orientations for periodic
  orbits in the domain and the target so that all signs are positive
  by allowing this orientations to differ. However, in this case,
  a sign change would occur in the continuation part $C$. For
  instance, this would be the case for iterated orbits when in our
  setting the negative sign occurs in~$R$.
\end{Remark}

\begin{Proposition}
  \label{prop:action}
  The map $g$ generates a $\Z_k$-action in the Floer homology. In
  other words, $g^k = \id$.
\end{Proposition}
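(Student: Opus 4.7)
The plan is to show that $(CR)^k$ coincides on homology with the identity continuation by separating time-shifts from continuations and then collapsing what remains. The key input will be the naturality of $C$ under time-shifts together with the orientation convention that extended operators are chosen to preserve the minimal period of an orbit.

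First I would set up an intertwining relation. For each $j \in \Z$ let $C_j$ denote the continuation map
\[
C_j\colon \CF_*^{(ka,\,kb)}\bigl(\tilde H^{\nat k}_{t+j+1},J_{t+j+1}\bigr)\longrightarrow \CF_*^{(ka,\,kb)}\bigl(\tilde H^{\nat k}_{t+j},J_{t+j}\bigr),
\]
constant in the Hamiltonian, so that $C=C_0$. Because $C_{j+1}$ is literally the time-shift of $C_j$ by one unit, the chain-level identity $R\circ C_j=C_{j+1}\circ R$ holds: the moduli spaces defining both sides are identified by translating the cylinder domain. Iterating this intertwining gives
\[
(CR)^k = C_0\,C_1\cdots C_{k-1}\,\circ\,R^k = \Cc\circ R^k,
\]
where $\Cc:=C_0 C_1\cdots C_{k-1}$ is a chain map from $\CF_*^{(ka,\,kb)}\bigl(\tilde H^{\nat k}_t,J_t\bigr)$ to itself, since $J$ and $\tilde H^{\nat k}$ are $k$-periodic in $t$.

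Second I would argue that $R^k=\id$ on the chain complex. Each generator $x(t)$ maps under $R^k$ to $\sigma(x)\,x(t+k)=\sigma(x)\,x(t)$ for a sign $\sigma(x)\in\{\pm 1\}$ that is the product of the signs introduced at each time-shift. By the orientation convention of Section~\ref{sec:def}, extended operators $\tD_x$ are chosen with period equal to the minimal period of $x$, and for orbits that are time-shifts of one another one uses \emph{the same} extended operator (viewed with shifted parameter). Consequently the family $\tD_{x(t+s)}$, $s\in[0,k]$, is a continuous loop of Fredholm operators returning to $\tD_{x(t)}$, so the accumulated map on determinant lines is an automorphism coming from a loop in the contractible space $\CB_x$ of admissible data. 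Hence it is homotopic to the identity and $\sigma(x)=+1$ for every generator, giving $R^k=\id$.

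Third I would show that $\Cc$ induces the identity on homology. Each factor $C_j$ is a continuation map with the Hamiltonian held fixed at $\tilde H^{\nat k}$, differing only in the almost complex structure. The composition $\Cc$ is therefore chain homotopic to a single continuation map associated with a concatenated homotopy of almost complex structures from $J_t$ back to $J_{t+k}=J_t$. This concatenated $J$-path is a loop in the contractible space of admissible $J$'s, hence homotopic rel endpoints to the constant path; since continuation maps depend on the homotopy class of the interpolating data only up to chain homotopy, $\Cc$ is chain homotopic to the identity continuation and so induces $\id$ on $\HF_*^{(ka,\,kb)}\bigl(\tilde H^{\nat k}\bigr)$. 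Combining the three steps, $g^k=\Cc\circ R^k=\id$ on homology.

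The main obstacle is the sign calculation in Step 2. The rearrangement in Step 1 is formal and Step 3 is standard continuation-map theory, but verifying that the accumulated sign $\sigma(x)$ equals $+1$ for every orbit—including iterated orbits whose minimal period properly divides $k$—requires unpacking the orientation prescription carefully and exhibiting the iterated time-shift as the monodromy of a loop in the space of extended operators over which the determinant line bundle is trivializable.
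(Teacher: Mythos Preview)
Your argument is correct and follows exactly the paper's route: commute the time-shifts past the continuations to get $(CR)^k=(C_0C_1\cdots C_{k-1})\circ R^k$, then use that a composite of continuations with fixed Hamiltonian induces the identity on homology. One small wobble: in Step~2 the accumulated map is not really the monodromy of $\LL$ over a loop in $\CB_x$ (the time-shift acts on the function space, not on the auxiliary data), but the conclusion $R^k=\id$ holds for the simpler reason---left implicit in the paper---that the $k$-fold shift is literally the identity on $k$-periodic functions, hence on each determinant line, so with the same orientation choices made in every shifted complex the accumulated sign is $+1$.
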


\begin{proof} We will show that 
\[
  (CR)^k\colon \CF_*^{(a,\,b)}\big(\tilde{H}^{\nat k}_{t}, J_{t}\big)
  \rightarrow \CF_*^{(a,\,b)}\big(\tilde{H}^{\nat k}_{t}, J_{t}\big) 
\]
can be written as a composition of continuation maps, and hence is the
identity on the level of homology. Denote by $R_i$, $i\in\Z$, the
time-shift map
\[
  R_i \colon \CF_*^{(a,\,b)}\big(\tilde{H}^{\nat k}_{t+i}, J_{t+i}\big)
  \rightarrow \CF_*^{(a,\,b)}\big(\tilde{H}^{\nat k}_{t+(i+1)}, J_{t+(i+1)}\big) 
\]
and by $C_{i+1}$ the continuation map
\[
C_{i+1} \colon \CF_*^{(a,\,b)}\big(\tilde{H}^{\nat k}_{t+(i+1)},
J_{t+(i+1)}\big)
\rightarrow  \CF_*^{(a,\,b)}\big(\tilde{H}^{\nat k}_{t+i}, J_{t+i}\big)
\]
given by shifting in time the continuation data in $C$ (i.e., a homotopy
in $H$ and $J$). With this new notation $C=C_1$ and $R=R_0$. Note that
all $R_i$'s are essentially the same map and $C_{i+1}$ is the
time-shift of $C_i$, up to signs determined by the relation
\[
\big\langle x, y \big\rangle_{C_i}=\big\langle R_i(x), R_{i-1}(y) \big\rangle_{C_{i+1}}
\]
where $\big\langle x, y \big\rangle_{C_i}$ denotes the coefficient of
$y$ in the image $C_i(x)$ of $x$. In other words the identity
\begin{equation}
\label{eq:commutes}
R_{i-1}C_i= C_{i+1}R_i
\end{equation}
holds. By applying (\ref{eq:commutes}) to $(C_1R_0)^k$ we conclude that
\[
(C_1R_0)^k=(C_1C_2\cdots C_k )( R_{k-1} \cdots R_1R_0)= C_1C_2\cdots C_k,
\]
which induces the identity map on the level of homology.
\end{proof}

\begin{Remark}
  The requirement that $J$ is one-periodic near the $k$-periodic
  orbits of $H$ is not essential for the definition of the
  $\Z_k$-action -- it can be avoided by considering the entire
  determinant bundle rather than specific determinant lines; see
  Section \ref{sec:orient}. However, it becomes useful in the proof of
  Proposition \ref{prop:action} and also in the proof of Theorem
  \ref{thm:1} below.
\end{Remark}  

\subsection{Calculation of the supertrace -- Proof of Theorem \ref{thm:1}}
\label{sec:super}
Now we are in a position to show that the supertrace of the map
\[
g \colon \HF_*^{(ka,\,kb)} (H^{\nat k}; \F) \rightarrow  \HF_*^{(ka,\,kb)} (H^{\nat k}; \F)
\]
is equal to the Euler characteristic of $\HF_*^{(a,\,b)}(H; \F)$ and
thus prove Theorem \ref{thm:1}. Here the coefficient ring $\F$ is
required to be a field. This allows us to split the chain complex as a
direct sum of kernels and images and conclude that the supertrace at
the chain level is equal to the supertrace on the level of homology.
We reduce the problem to the case where $H^{\nat k}$ is non-degenerate
and the interval $(ka, kb)$ contains a single action value. These are
non-restrictive assumptions since one can compute the supertrace
$\str(g)$ at the chain level (\cite[Theorem
  23.2]{Br2}) and both $\str(g)$ and the Euler characteristic are
additive under direct sum, and hence additive with
respect to the action filtration.

We further simplify the problem by choosing a continuation 
\[
  C \colon \CF_*^{(ka,\,kb)}\big(H^{\nat k}_{t+1}, J_{t+1}\big)
  \rightarrow  \CF_*^{(ka,\,kb)}\big(H^{\nat k}_t, J_t\big)
\]
with a homotopy constant in the almost complex structure on a tubular
neighborhood of every $k$-periodic orbit. For a generic homotopy in
this class the transversality conditions are satisfied (see
\cite[Rmk.\ 5.2]{FHS}). As a result, since $(ka,\,kb)$ contains a
single action value, the continuation part of $g$ becomes the identity. We
have
\[
\str(g)=\str(CR)=\str(R)=\sum_{\substack{ x \in \PP(\tilde{H}) \\ 
\mathcal{A}_{\tilde{H}}(x) \in (a,\,b)}} \pm \,(-1)^{\MUCZ(x^k)},
\]
where $\pm$'s are the signs in the time-shift map $R$ (see Section
\ref{sec:def}). Note that only the iterated one-periodic orbits appear
in the trace formula. Next we show that the sign of a $k$-iterated
orbit $x^k$ is equal to $(-1)^{\MUCZ(x)-\MUCZ(x^k)}$ (see \cite[Thm.\
3]{BM}), and hence conclude that
\[
\str(g)=\sum_{\substack{ x \in \PP(\tilde{H}) \\ 
    \mathcal{A}_{\tilde{H}}(x) \in (a,\,b)}}  (-1)^{\MUCZ(x)} =
\chi \big(\HF_*^{(a,\,b)}(H; \F) \big) .
\]

\begin{Proposition}
  \label{prop:it-sign}
The sign for a $k$-iterated orbit $x^k$ is equal to $(-1)^{\MUCZ(x)-\MUCZ(x^k)}$.
\end{Proposition}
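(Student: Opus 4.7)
The plan is to identify the sign in question as the sign of the natural $\Z_k$-action on the determinant line $\det(\tilde{D}_{x^k})$ induced by the time-shift $\tau\colon t\mapsto t+1$, and then compute this sign by reducing to the situation of Lemma \ref{lemma:index}. Since $x$ is one-periodic, the $k$-iterate $x^k$ is fixed (as an unparametrized orbit) by $\tau$, and the map $R_*\colon\det(\tilde{D}_{x^k})\to\det(\tilde{D}_{x^k})$ whose sign we must determine is precisely the action of a generator of this $\Z_k$-symmetry.

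First, I would set up a $\tau$-equivariant extended operator. Using the $1$-periodic unitary trivialization of $TM$ along $x$ (which is $\tau$-invariant when $x$ is viewed as a $k$-periodic orbit) and a one-periodic almost complex structure near $x$, the asymptotic operator $D=J_0\partial_t+S(t)$ has $S$ which is $1$-periodic, hence $\tau$-invariant. On the extension to $\C$, working in the coordinates $z=e^{-(s+2\pi it)/k}$ near $z=0$, the map $\tau$ corresponds to rotation of $\C$ by $2\pi/k$, which is a global symmetry. Choosing the matrix-valued function $B$ in \eqref{eq:tD} to be $\tau$-invariant (possible since $B$ is free away from the asymptotic end), the operator $\tilde{D}_{x^k}$ becomes $\tau$-equivariant, and $\ker\tilde{D}_{x^k}$ and $\coker\tilde{D}_{x^k}$ are finite-dimensional real $\Z_k$-representations.

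Next, I would identify the $\tau$-fixed parts with the corresponding data for the un-iterated orbit $x$. A section on $\C$ is $\tau$-fixed precisely when it descends to the $\tau$-quotient, which for the asymptotic operator means being $1$-periodic in $t$. Consequently $(\ker\tilde{D}_{x^k})^{\Z_k}=\ker\tilde{D}_x$ and $(\coker\tilde{D}_{x^k})^{\Z_k}=\coker\tilde{D}_x$. Write the complements as $\ker^{\neq}$ and $\coker^{\neq}$; these carry $\Z_k$-representations with no fixed vectors. By the algebraic observation already used in the proof of Lemma \ref{lemma:index} (that a cyclic group without fixed vectors acts on the determinant of a real representation by $(-1)^{\dim}$), the generator $\tau$ acts trivially on $\det\big((\ker\tilde{D}_{x^k})^{\Z_k}\big)$ and $\det\big((\coker\tilde{D}_{x^k})^{\Z_k}\big)$, and by $(-1)^{\dim\ker^{\neq}}$ and $(-1)^{\dim\coker^{\neq}}$ on the determinants of the complements.

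Assembling these via $\det(\tilde{D}_{x^k})=\det(\coker\tilde{D}_{x^k})^{*}\otimes\det(\ker\tilde{D}_{x^k})$, the sign of $\tau$ on $\det(\tilde{D}_{x^k})$ equals
\[
(-1)^{\dim\ker^{\neq}-\dim\coker^{\neq}}=(-1)^{\ind\tilde{D}_{x^k}-\ind\tilde{D}_{x}}=(-1)^{\MUCZ(x)-\MUCZ(x^k)},
\]
using $\ind\tilde{D}_y=n-\MUCZ(y)$ from Section \ref{sec:orient}. The main obstacle is arranging the $\tau$-equivariance of $\tilde{D}_{x^k}$ on all of $\C$ rather than just on the cylindrical end; this is handled by the observation above that $\tau$ extends to a global rotation of $\C$ in the natural compactification, so a $\tau$-invariant choice of $B$ exists. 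Everything else is then a direct application of Lemma \ref{lemma:index}'s determinant computation, together with additivity of the Conley--Zehnder index as a virtual index.
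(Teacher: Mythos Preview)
Your proposal is correct and is essentially the same argument as the paper's. Both set up the time-shift as a $\Z_k$-action on $\ker\tilde{D}_{x^k}$ and $\coker\tilde{D}_{x^k}$ (using the one-periodic extension of the asymptotic operator prescribed in Section~\ref{sec:def}), identify the fixed subspaces with $\ker\tilde{D}_x$ and $\coker\tilde{D}_x$, and then read off the sign from the parity of the dimension of the complement, which equals $\ind\tilde{D}_{x^k}-\ind\tilde{D}_x=\MUCZ(x)-\MUCZ(x^k)$. The only cosmetic difference is that the paper phrases the last step in terms of the $(-1)$-eigenspace (rotation summands being even-dimensional), whereas you invoke the determinant computation from the proof of Lemma~\ref{lemma:index}; the paper itself notes in the remark following the proof that this is an equivalent route.
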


\begin{proof}
  Recall that the extended operator $\tilde{D}_{x^k}$ for a
  $k$-iterated orbit $x^k$ is one-periodic; see Section
  \ref{sec:def}. The time-shift map generates a $\Z_k$-action on the
  kernel an cokernel of $\tilde{D}_{x^k}$. Irreducible representations
  of this action are rotations, and the multiplications by one and
  negative one. We are interested in the parity of the dimension of
  $(-1)$-eigenspace.

  Observe that $(+1)$-eigenspace corresponds to the kernel and the
  cokernel of the extended operator $\tilde{D}_x$ for the un-iterated
  orbit $x$. So the parity of the dimension of the $(-1)$-eigenspace
  is equal to the parity of the difference between the Fredholm
  indices of $\tilde{D}_{x^k}$ and $\tilde{D}_{x}$, which in turn is
  equal to $\MUCZ(x)-\MUCZ(x^k)$.
\end{proof}

\begin{Remark}
  Alternatively, one can adapt the proof of Lemma \ref{lemma:index} to
  establish Proposition \ref{prop:it-sign}
\end{Remark} 

We have the same supertrace formula for the generator $g$ of the
$\Z_k$-action on the local Floer homology $\HF_*(x^k; \F)$ of an
isolated iteration $x^k$ of a one-periodic orbit
$x \colon S^1 \rightarrow M$.  In this case, using \eqref{eq:cz-det},
one can also write the formula as
\[
\str\big(g_* \colon \HF_*(x^k; \F) \rightarrow \HF_*(x^k; \F) \big)=(-1)^n\chi(x),
\]
where $\chi(x)$ is the Lefschetz index of the fixed point
$x(0)$ and $\dim M =2n$.

\begin{Remark}
  One consequence of the proof of Theorem \ref{thm:1} is that
  \begin{equation}
    \label{eq:virt}
 \CF_{\even}^{(ka,\,kb)}\big(H^{\nat k}, J\big)-\CF_{\odd}^{(ka,\,kb)}\big(H^{\nat k},
 J\big)=
 \HF_{\even}^{(ka,\,kb)}\big(H^{\nat k}\big)-\HF_{\odd}^{(ka,\,kb)}\big(H^{\nat k}\big)
  \end{equation}
  in the virtual representation ring $R(\Z_k)$ when
  $\charr\,\F=0$. This is not immediately obvious because while $\Z_k$
  acts on the graded vector space
  $\CF_{*}^{(ka,\,kb)}\big(H^{\nat k}, J\big)$ this action need not
  commute with the Floer differential; \eqref{eq:virt} readily
  follows from the fact that both sides have the same character
  $\chi(g)=\str(g)$ for all $g\in\Z_k$. This also shows that the right
  hand side of \eqref{eq:virt} lies in the subgroup $B(\Z_k)$ of
  $R(\Z_k)$ generated by permutation representations. For $\F=\Q$ this
  statement is void, for then $B(\Z_k)=R(\Z_k)$, but for other
  fields (e.g., $\F=\C$) $B(\Z_k)$ is a proper subgroup. Furthermore,
  as a result, all generators of the $\Z_k$-action on the filtered
  Floer homology have the same supertrace.  (In general, for a
  $\Z_k$-action on a (graded) vector space, the (super)trace of a
  generator depends on the generator.)  These observations carry over
  to the setting of Theorem \ref{thm:1e}.
\end{Remark}

\subsection{Proof of Theorem \ref{thm:1e}}
\label{sec:pf-thm:1e}
Let $G$ be a finite group acting on a closed symplectically aspherical
manifold $M$ by symplectomorphisms and let $H$ be a $G$-invariant (for
all times) Hamiltonian on $M$. Then $H$ generates a $G$-equivariant
time-dependent Hamiltonian flow $\varphi_H^t$ and we have a $G$-action on
the filtered Floer homology of $H$; see Section \ref{sec:orient}. 

Our goal is to prove Theorem \ref{thm:1e}. Throughout the proof we
will assume that, as in the theorem, $G$ is cyclic and denote by $g$ a
generator of $G$. The argument, based on Lemmas \ref{lemma:index} and
\ref{lemma:index2}, is very close to the proof of Theorem \ref{thm:1}
and we only outline it. It is not hard to see that by applying an
equivariant perturbation to $H$ we may assume that $\varphi$ and hence
$\varphi^G:=\varphi\mid_{M^G}$ are non-degenerate.

Let $x$ be a one-periodic orbit of $H$ fixed by the $G$-action. Thus
$x$ lies on $M^G$ and can also be viewed as a one-periodic orbit of
$H^G$. Then $g x=\pm x$ in the Floer complex and the $\pm$ sign, which
we denote by $\sgn(g)$ as in Lemma \ref{lemma:index2}, is determined
by the action of $g$ on the determinant line bundle of $x$; see
Section \ref{sec:orient}. Denote by $2m$ the codimension of the
connected component of $M^G$ containing $x$. To prove the theorem, it
suffices to show that
$$
\sgn(g)(-1)^{\MUCZ(x)}=(-1)^{m}(-1)^{\MUCZ(x^G)}
$$
or equivalently
$$
\sgn(g)=(-1)^{m}(-1)^{\MUCZ(x)-\MUCZ(x^G)},
$$
which immediately follows from \eqref{eq:index2}.

\section{Smith theory -- Proofs of Theorems \ref{thm:2} and \ref{thm:2g}}
\label{sec:smith}
In this section we prove Theorem \ref{thm:2} and its
equivariant counterpart Theorem~\ref{thm:2g}.

\begin{proof}[Proof of Theorem \ref{thm:2g}]
  As in the statement of the theorem, let $G$ be a finite $p$-group acting on
  $(\R^{2n}, \omega_0)$ by linear symplectomorphisms and $\varphi$ be a
  $G$-equivariant local Hamiltonian diffeomorphism of
  $(\R^{2n}, \omega_0)$ with an isolated fixed point at the origin. We
  denote by $\varphi^G$ the restriction of $\varphi$ to the fixed point set
  $(\R^{2n})^G \subset \R^{2n}$ of $G$. Note that $(\R^{2n})^G$ is a
  symplectic linear subspace of $(\R^{2n}, \omega_0)$ and $\varphi^G$ is a local
  Hamiltonian diffeomorphism of $\big((\R^{2n})^G, \omega_0\vert_{(\R^{2n})^G}\big)$
  with isolated fixed point at the origin. We need to show
  that the local Floer homology of $\varphi$ and $\varphi^G$ with
  $\F_p$-coefficients satisfy the Smith inequality
\[
\dim \HF_*(\varphi^G, x; \F_p) \leq  \dim \HF_*(\varphi, x; \F_p).
\]
We do this by interpreting $\HF_*(\varphi, x; \F_p)$ as the singular
homology of a pair of sufficiently nice topological spaces
$(U, U_{-})$ which carry a $G$-action; and $\HF_*(\varphi^G, x; \F_p)$ as
the singular homology of the fixed point set of this action. Then the
result is a consequence of \eqref{eq:smith-intro}, the classical Smith
inequality, applied to this pair:
\begin{equation}
  \label{eq:Smith-top}
\dim \H_*(U^{G}, U_{-}^{G}; \F_p) \leq \dim \H_*(U, U_{-} ; \F_p);
\end{equation}
see, e.g., \cite[Section 3, Thm.\ 7.9]{Br} or \cite{Bo}. Note that
\eqref{eq:Smith-top} for a general finite $p$-group $G$ follows from the case
$G=\Z_p$, since a finite $p$-group is nilpotent and for a normal subgroup
$H \leq G$:
\[
(U^{H}, U_{-}^H)^{G/H}=(U^G, U_{-}^{G}).
\]

Fix a complement $W$ to $V:= (\R^{2n})^G \subset \R^{2n}$ and identify
$\R^{2n}$ with $V\times W$.  The $G$-action on $(\R^{2n}, \omega_0)$
extends to the twisted product
$$
\bar{\R}^{2n} \times \R^{2n}=(\R^{2n}\times \R^{2n}, -\omega_0 \times
\omega_0)
$$
by acting coordinate-wise. Let
$\triangle \subset \bar{\R}^{2n} \times \R^{2n}$ denote the diagonal
and $\gr(D\varphi)\subset \bar{\R}^{2n} \times \R^{2n}$ be the graph
of the linearization $D\varphi$ of $\varphi$ at the origin. Note that
both $\triangle$ and $\gr(D\varphi)$ are $G$-invariant subsets of
$ \bar{\R}^{2n} \times \R^{2n}$.  Below we show that there is a
$G$-invariant common Lagrangian complement $N$ to $\triangle$ and
$\gr(D\varphi)$.

First assume that $\triangle$ and $\gr(D\varphi)$ are transverse
and choose a basis $\{e_i\}$, $\{f_i\}$ such that
$\omega_0 (e_i, f_j) = \delta_{ij}$. Since $\triangle$ and
$\gr(D\varphi)$ are $G$-invariant; the span of $\{e_i+f_i \}$ gives a
$G$-invariant Lagrangian $N$, which is complement to both $\triangle$
and $\gr(D\varphi)$. Next we consider the case
$\triangle=\gr(D\varphi)$. Choose a $G$-invariant metric on
$\bar{\R}^{2n} \times \R^{2n}$. Together with $\omega_0$, the metric
defines a compatible $G$-invariant complex structure $J$ on
$\bar{\R}^{2n} \times \R^{2n}$ and, using $J$, we set $N=J\triangle$. 

The general case: The intersection $S:=\triangle \cap \gr(D\varphi)$
is $G$-invariant, so we can split $\triangle = S \bigoplus S^{'}$
where $S^{'}\subset \triangle$ is a $G$-invariant subspace (and
similarly $\gr(D\varphi)$) and reduce the problem to the two cases
above.

In what follows we identify the twisted product
$ \bar{V} \times V =(V\times V, -\omega_0\vert_V \times
\omega_0\vert_V) $ with its image (given by inclusion) in
$\bar{\R}^{2n} \times \R^{2n}$.  Under this identification the
diagonal in $\bar{V} \times V$ is mapped to $\triangle^G$ and the
graph $\gr(D\varphi^G)$ of the linearization of $\varphi^G$ at the
origin is mapped to $\gr(D\varphi)^G$. Observe that the fixed point
set $N^G \subset N$ of the $G$-invariant complement $N$ found above is
a common Lagrangian complement to the diagonal $\triangle^G$ and the
graph $\gr(D\varphi)^G$ in the twisted product $\bar{V} \times
V$. Indeed, to see this, choose basis $\{ e_i \}$ for
$\gr(D\varphi)^G \subset \bar{\R}^{2n} \times \R^{2n}$ and let
$f_i \in N$ such that $\omega_0 (e_i, f_j)=\delta_{ij}$. The averaging
argument
\[
\delta_{ij}=\frac{1}{\vert G \vert} \sum_{g \in G} \omega_0 (ge_i,
gf_j) 
= \omega_0 (e_i , \frac{1}{\vert G \vert} \sum_{g \in G} gf_j)
\]
shows that $N^G$ is Lagrangian complement for $\gr(D\varphi)^G$ and as
well as for the diagonal $\triangle^G$ by symmetry.

Let $f \colon \triangle \rightarrow \R$ be a generating function for
$\varphi$ with respect to $\triangle \times N$. By construction,
$df \colon \triangle \rightarrow N$ is $G$-equivariant and the
restriction $df^G \colon \triangle^G \rightarrow N^G$ coincides with
the graph of $\varphi^G$ in $\triangle^G \times N^G$. Hence, the
restriction $f^G \colon \triangle^G \rightarrow \R$ is a generating
function for $\varphi^G$ with respect to $\triangle^G \times N^G$. Note
that since $df$ is $G$-equivariant, every primitive $f$ is
$G$-invariant and $df$ agrees with $d(f^G)$ on $\triangle^G$.

Using the generating functions $f$ and $f^G$ chosen above and the
isomorphism between the Floer homology and the generating function homology
(see \cite{Vi}) we pass to the local Morse homology:
\[
\HF_{*}(\varphi^G, x; \F_p)=\HM_*(f^G; \F_p)\textrm{ and }
\HF_{*}(\varphi, x; \F_p)=\HM_*(f; \F_p).
\]
Note that these isomorphisms are up to a shift in degree. Now the problem
reduces to showing that
\[
\dim \HM_*(f^G; \F_p) \leq \dim \HM_*(f; \F_p). 
\]

As the last step, we choose a $G$-invariant Gromoll--Meyer pair
$(U, U_{-})$ for $f$ as in \cite[App.\ B]{HHM}. (Actually, a stronger
result is proven in \cite{HHM}, but only for a cyclic group
$G$. However, the proof of the existence of an invariant
Gromoll--Meyer pair word-by-word extends to any finite group.) To
finish the proof observe that the fixed point set $(U^{G}, U_{-}^{G})$
is a Gromoll--Meyer pair for $f^G$. Theorem \ref{thm:2} follows now
from the classical Smith inequality, \eqref{eq:Smith-top}, applied to
$(U, U_{-})$:
\[
\dim \underbrace{\H_*(U^{G}, U_{-}^{G}; \F_p)}_{\HM_*(f^G;
  \F_p)}
\leq \dim \underbrace{ \H_*(U, U_{-}; \F_p)}_{\HM_*(f; \F_p)}. 
\]
\end{proof}

\begin{proof}[Proof of Theorem \ref{thm:2}] This theorem is a formal
  consequence of Theorem \ref{thm:2g}. As in \cite{He}, we use the
  symplectic version of Dold's trick discussed in Section
  \ref{sec:eq-str}. Namely, consider the germ of the
  symplectomorphism
  \begin{equation}
    \label{eq:varphi_p}
\varphi_p\colon (x_1,\ldots,x_p)\mapsto
\big(\varphi(x_2),\ldots,\varphi(x_p),\varphi(x_1)\big)
\end{equation}
of the $p$-fold product $M^p=M\times \ldots \times M$ near the fixed
point. This germ is Hamiltonian and it commutes with the symplectic
$G=\Z/p\Z$-action generated by the cyclic permutation
$$
(x_1,\ldots,x_p)\mapsto (x_2,\ldots,x_p,x_1),
$$
and $\bx:=(x,\ldots,x)$ is an isolated fixed point of $\varphi_p$
whenever $x$ is an isolated fixed point of $\varphi$. The fixed point
set $(M^p)^G$ is the multi-diagonal and the restriction $\varphi_p^G$
is just $\varphi$. Furthermore, the local Floer homology
$\HF_*(\varphi_p,\bx)$ is naturally isomorphic to the local Floer
homology $\HF_*(\varphi^p,x^p)$. This is a local
  version of a result proved in \cite{LL}, where, in particular, an
  isomorphism between the (global) Floer homology of the
  symplectomorphism $\varphi_p$ induced by $\varphi$ via
  \eqref{eq:varphi_p} and the Floer homology of the iterate
  $\varphi^p$ is constructed. (In fact, the result in \cite{LL}
  concerns Lagrangian correspondences and is much more general.) The
Smith inequality, \eqref{eq:smith}, follows now from
Theorem~\ref{thm:2g}.
\end{proof}

\begin{Remark}
  In the standard modern proof of the Smith inequality,
  \eqref{eq:Smith-top}, one obtains the inequality as an immediate
  consequence of the Borel localization theorem for $\Z_p$-actions
  (see, e.g., \cite{Bo}) and one can also expect a version of this
  theorem to hold for the local or filtered, in the aspherical case,
  Floer homology; cf.\ \cite{SZ}. The argument above falls just a
  little bit short from establishing such a theorem in the local
  case. The missing part is an identification of the
  $\Z_p$-equivariant generating function homology and the
  $\Z_p$-equivariant Floer homology -- an equivariant analog of a
  result from \cite{Vi}.

  On other hand, some other refinements of \eqref{eq:Smith-top} do not
  seem to have obvious Floer theoretic analogs. For instance,
 \cite[Section 3, Thm.\ 4.1]{Br} replaces the rank of the total homology by the
  sum of dimensions for degrees above a fixed treshold. We do not see
  how to extend this result to the Floer theoretic setting. (The most
  naive attempts break down already for a strongly non-degenerate
  orbit.)
\end{Remark}

\begin{Remark}
  The method used in this section to prove Theorem \ref{thm:2} can be
  applied whenever the (iterated) Floer homology can be identified
  with the homology of a sufficiently nice topological $\Z_p$-space
  and this identification behaves well under iteration. In particular,
  one should be able to use it to establish the Smith inequality for
  the filtered Floer homology of Hamiltonians on the tori or cotangent
  bundles. However, some parts of the argument require modifications
  and we omit the details; for the filtered version of the main
  theorem of \cite{ShZh} holds in much greater generality. Note in
  this connection that it would be interesting to see if the results
  of \cite{Se,ShZh} have purely equivariant generalizations along the
  lines of Theorem~\ref{thm:2g}.
\end{Remark}

\end{document}